\newlength{\labwidth}
\newcommand{\labarrow}[1]{
\settowidth{\labwidth}{$\scriptstyle \;\; #1 \;\;$}
\stackrel{#1}{\smash{\hbox to \labwidth{\rightarrowfill}}
\vphantom{\longrightarrow}}
}
\newcommand{\QQ}{{\mathbf Q}}
\DeclareMathOperator*{\colim}{colim}
\newcommand {\3}{{$\Gamma_1(3)$}}
\newcommand {\ZZ}{{\mathbb Z}}
\newcommand{\ra}{\rightarrow}                   
\newcommand{\lra}{\longrightarrow}              
\newfont{\german}       {eufm10 at 12pt}
\DeclareMathOperator{\Hom}{Hom} \DeclareMathOperator{\Ext}{Ext}
\DeclareMathOperator{\Tor}{Tor}
\newtheorem{thm}{Theorem}[section]
\newcounter{numerierer}
\newcounter{leer}
\newtheorem{defn}[thm]{Definition}
\newtheorem{prop}[thm]{Proposition}
\newtheorem{cor}[thm]{Corollary}
\newtheorem{lemma}[thm]{Lemma}
\theoremstyle{definition}  
\newenvironment{definition}{\begin{defn}\rm}{\end{defn}}
\newtheorem{conj}[thm]{Conjecture}
\newtheorem{remark}[thm]{Remark}
\subjclass[2000]{primary: 55N34, 55R40, secondary: 55P50, 22E66}
\title{Characteristic classes in $TMF$ of level \3 }
\author{Gerd Laures}
\address{ Fakult\"at f\"ur Mathematik,  Ruhr-Universit\"at Bochum, NA1/66,
  D-44780 Bochum, Germany}
\begin{document}
\begin{abstract}
Let $TMF_1(n)$ be the spectrum of topological modular forms equipped with a $\Gamma_1(n)$-structure. We compute the $K(2)$-local $TMF_1(3)$-cohomology of $B{\mathit String}$ and $B{\mathit Spin}$: both are power series rings freely generated by classes that we explicitly construct and which generalize the classical Pontryagin classes. As a first application of this computation, we show how to construct $TMF(3n)$-cohomology classes from stable positive energy representations of the loop groups $L{\mathit Spin}$. 
\end{abstract}
\maketitle
\section{Introduction and Statement of Results}
Characteristic classes are cohomology classes which are naturally associated to principal $G$-bundles on topological spaces. Examples include Stiefel-Whitney classes for $O(n)$-bundles or $KO$-Pontryagin classes for $SO(n)$-bundles. If the cohomology theory is oriented with respect to $G$-bordism, the characteristic classes can be evaluated on fundamental classes of  manifolds with $G$-structure and yield characteristic numbers. It is well known that oriented bordism is determined by  Stiefel-Whitney numbers and rational Pontryagin numbers, whereas spin bordism is determined by Stiefel-Whitney numbers and $KO$-Pontryagin numbers.\par
In this paper we consider characteristic classes for spin and string bundles. Here, $String$ is the 6-connected cover of the $Spin$-group. The cohomology theory is $TMF_1(3)$, the cohomology of topological modular forms of level \3. It is possible that  string bordism is determined by Stiefel-Whitney  and $TMF$-characteristic numbers for some level structures. At the prime 2, the connective version of $TMF$ should split off of the Thom spectrum $MString$ and there is some evidence that another summand is given by the 16th suspension of the connective cover of $TMF_0(3)$ (compare \cite{MR2508904}). However, in order to get a map, a better understanding of the characteristic classes is necessary. (The relation between $TMF_1(3)$ and $TMF_0(3)$ is described in section \ref{tmf}.) \par
The homology groups $E_*BU\langle 6 \rangle $ for complex oriented theories $E$ have been described in \cite{MR1869850} in terms of cubical structures on the formal group of $E$. Here, $BU\langle 6\rangle$ is the complex analogue of $B{\mathit String}$. One may hope that the real case $E_*B{\mathit String}$ admits a similar description in terms of ``real'' cubical structures. In \cite{LO14} the relation between cubical structures and the results of the current article has been further investigated.\par
The cohomology groups $E^*B{\mathit String}$ for various complex oriented versions of $TMF$ have been considered in \cite{MR1660325}. When $2$ is inverted these groups have been computed by the topological $q$-expansion principle. The difficulty appears at the prime $2$ since here a splitting result for the string bordism  spectrum along the lines of Hovey-Ravenel \cite{MR1297530} is missing. This paper deals with this remaining prime. 
\par
We first look at characteristic classes for spin bundles and show the following $q$-expansion principle.
\begin{thm}\label{main}
The diagram
$$
\xymatrix{TMF_1(3)^*B{\mathit Spin} \ar[r]^\lambda \ar[d] &K_{Tate}^*B{\mathit Spin}\ar[d]^{ch}\\
 H^*(B{\mathit Spin}, TMF_1(3)^*_\QQ)\ar[r]&  H^*(B{\mathit Spin}, {K_{Tate}}^*_\QQ)}
 $$
 is a pullback.
\end{thm}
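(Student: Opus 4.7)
The plan is to realize the square as the result of applying the functor $(-)^*B{\mathit Spin}$ to an arithmetic fracture square at the spectrum level. Concretely, in the $K(2)$-local setting of this paper, I expect $TMF_1(3)$ to sit in a homotopy pullback of spectra
$$
\xymatrix{TMF_1(3) \ar[r]^\lambda \ar[d] & K_{Tate} \ar[d]^{ch} \\ TMF_1(3)_\QQ \ar[r] & (K_{Tate})_\QQ}
$$
whose top map is the Tate-curve orientation and whose vertical maps are rationalization. On coefficients this is the classical $q$-expansion principle for $\Gamma_1(3)$-modular forms: an element of $\pi_*TMF_1(3)$ is recovered from its $q$-expansion together with its rationalization, provided these agree over rational $q$-expansions. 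Establishing this spectrum-level pullback is the heart of the argument.

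Given such a square, mapping $\Sigma^\infty_+ B{\mathit Spin}$ into it yields a Mayer--Vietoris long exact sequence relating $TMF_1(3)^*B{\mathit Spin}$, $K_{Tate}^*B{\mathit Spin}$, $(TMF_1(3)_\QQ)^*B{\mathit Spin}$ and $((K_{Tate})_\QQ)^*B{\mathit Spin}$. Since $TMF_1(3)_\QQ$ and $(K_{Tate})_\QQ$ are rational, hence equivalent to products of shifted Eilenberg--MacLane spectra, the rationalized cohomology theories are identified with ordinary cohomology with graded-ring coefficients, producing the bottom row of the square exactly as in the theorem. The theorem then reduces to showing that the Mayer--Vietoris connecting homomorphism vanishes, equivalently that the natural map
$$
K_{Tate}^*B{\mathit Spin} \;\oplus\; H^*(B{\mathit Spin}, TMF_1(3)^*_\QQ) \;\longrightarrow\; H^*(B{\mathit Spin}, K_{Tate}^*_\QQ)
$$
is surjective in every degree.

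To verify this surjectivity I would use that $H^*(B{\mathit Spin}; R) \cong R[\![p_1, p_2, \ldots]\!]$ is a power series ring in rational Pontryagin classes for any graded $\QQ$-algebra $R$, together with the main computation of $K_{Tate}^*B{\mathit Spin}$ which produces $K$-theoretic Pontryagin classes whose Chern characters have the classical Pontryagin classes as leading terms. Given any target class in $H^*(B{\mathit Spin}, K_{Tate}^*_\QQ)$, one first subtracts the Chern character of a suitable integral class in $K_{Tate}^*B{\mathit Spin}$ to reduce to an element whose coefficients lie in the image of $TMF_1(3)^*_\QQ$, and then lifts this remainder from the rational summand. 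The separation of the $q$-integral part from a rational-modular-form correction is the content of the $q$-expansion principle transferred from coefficients to $B{\mathit Spin}$-cohomology.

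The main obstacle is the first step: constructing the spectrum-level fracture square with the correct maps, and in particular verifying $K(2)$-locally that $TMF_1(3)$ is genuinely the homotopy pullback of $K_{Tate}$ and $TMF_1(3)_\QQ$ over $(K_{Tate})_\QQ$. At the prime $2$ the traditional $q$-expansion arguments (which invert $2$) are unavailable, so one must exploit the explicit $K(2)$-local description of $TMF_1(3)$ via its associated elliptic curve with $\Gamma_1(3)$-structure. Once that pullback is in hand, the cohomological conclusion of the theorem is essentially formal.
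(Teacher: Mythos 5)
Your first step---realizing the square as a homotopy pullback of spectra
$$
\xymatrix{TMF_1(3) \ar[r] \ar[d] & K_{Tate} \ar[d] \\ TMF_1(3)_\QQ \ar[r] & (K_{Tate})_\QQ}
$$
---fails, and this is not a repairable technicality. All four spectra have homotopy concentrated in even degrees, so if $P$ denotes the actual homotopy pullback, the Mayer--Vietoris sequence gives $\pi_{2n-1}P\cong \operatorname{coker}\bigl(\pi_{2n}K_{Tate}\oplus\pi_{2n}TMF_1(3)_\QQ\to \pi_{2n}(K_{Tate})_\QQ\bigr)$. That cokernel is nonzero: the $q$-expansion of any rational $\Gamma_1(3)$-modular form has bounded denominators (it is a finite $\QQ$-linear combination of monomials in $a_1,a_3,\Delta^{-1}$, each with $q$-expansion in $\ZZ[1/3]((q))$), so a Laurent series in $\QQ((q))$ with unbounded denominators is not a sum of an element of $\ZZ[1/3]((q))$ and a rational modular form. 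Hence $P\not\simeq TMF_1(3)$. The classical $q$-expansion principle gives exactly the \emph{pullback} (left-exactness) property of the coefficient square, not the surjectivity you would need to promote it to a homotopy pullback. The same unbounded-denominator obstruction defeats your later surjectivity claim for $K_{Tate}^*B{\mathit Spin}\oplus H^*(B{\mathit Spin};TMF_1(3)^*_\QQ)\to H^*(B{\mathit Spin};(K_{Tate})^*_\QQ)$: subtracting the Chern character of an integral class can never fix unbounded denominators, and the rational summand only contributes power series with modular-form (hence bounded-denominator) coefficients.

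The second, larger gap is that you treat the passage from coefficients to $B{\mathit Spin}$ as ``essentially formal,'' whereas that is where all the work lies. The paper never uses a fracture square in the $q$-expansion direction; it applies the left-exact functor $\Hom_{BP_*}(BP_*B{\mathit Spin},-)$ to the pullback square of coefficient rings (legitimate because all four corners are Landweber exact), and then identifies each resulting corner with the corresponding cohomology group via a universal coefficient isomorphism $E^*B{\mathit Spin}\cong\Hom_{E_*}(E_*B{\mathit Spin},E_*)$. Establishing that isomorphism at $p=2$ for $TMF_1(3)$ is the heart of the proof: it requires the Kitchloo--Laures computation of $K(2)_*B{\mathit Spin}$, a splitting principle (injectivity of restriction to maximal tori for $E(k,2)$-cohomology, via infinite-divisibility arguments in the style of Ravenel--Wilson--Yagita), and a chromatic argument with $E/p^\infty$ to kill the $\Ext^2$-term of the universal coefficient spectral sequence. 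The only genuine homotopy pullback used is Bousfield's arithmetic square $Y\to\prod_p L_{S\ZZ/p}Y$ over $L_{S\QQ}Y$, which serves to assemble the $p$-complete results into the integral statement at the very end. If you want to salvage your outline, you must replace the spectrum-level fracture square by the algebraic pullback of coefficient rings plus the universal coefficient theorem, and supply a proof of the latter.
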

The horizontal map is Miller's elliptic character which corresponds to the evaluation at the Tate curve on the moduli stack of elliptic curves (\cite{MR1022688}.) On coefficients this map is just the traditional q-expansion map for modular forms. The theory $K_{Tate}$ is the power series ring $K[1/3]((q))$ of $K$-theories as in \cite{MR1660325}. The right vertical map is the Chern character and the left vertical map is the Dold character, that is, the map to rational cohomology induced by the exponential of the formal group law (see ibid.)
\par
The theorem determines the ring of $TMF_1(3)$-characteristic classes for spin bundles as follows. An element of $K_{Tate}^*B{\mathit Spin}$ is a $K_{Tate}$-characteristic class for spin bundles, that is, a formal series of virtual vector bundles which is naturally defined for spin bundles. If its  Chern character is invariant under the appropriate M\"obius transformations then it gives rise to a unique  $TMF_1(3)$-characteristic class.
This property allows the construction of many natural classes. An application to  representations  of loop groups will be given later.
 \par

The proof of the theorem is given in Section \ref{proof thm}.  Locally at the prime 2 the cohomology theory $TMF_1(3)$ is a generalized Johnson-Wilson spectrum $E(2)$ by Proposition \ref{ge2}. Hence it relates to the Morava $K$-theory $K(2)$ by a Bockstein spectral sequence. A splitting principle which is based on the Kitchloo-Laures computation  \cite{MR1909866} of the Morava $K(2)$-homology of $B{\mathit Spin}$ forces  the universal coefficient spectral sequence to collapse. 
Its $E_2 $-term is studied via a chromatic resolution. 
Note that this part of the work does not involve the theory of elliptic curves but only the study of formal group laws of height at most 2. However, the statement of the theorem is a transchromatic property which finds a natural formulation in the $q$-expansion of modular forms once the generalized $E(2)$-theory has been chosen to be $TMF_1(3)$. 
The pullback property is reduced via the universal coefficient isomorphisms to the classical q-expansion principle.
\par
The theorem allows us to the construct Pontryagin classes for $TMF_1(3)$. The pullback diagram relates these to the classical Pontryagin classes of $K$-theory and rational Pontryagin classes in a nice way.
\begin{thm}\label{Pontryagin}
\begin{enumerate}
\item
There are unique classes $p_i\in TMF_1(3)^{4i}B{\mathit Spin}$ with the following property: the formal series  $p(t)=1+p_1t+p_2t^2\ldots$ is given by $$ \prod_{i=1}^m (1-t\rho^*(x_i \overline{x}_i))$$
when restricted to the classifying space of each maximal torus of $Spin(2m)$. Here, $\rho$ is the map to the maximal torus of $SO(2m)$ and the $x_i$ (and  $\overline{x}_i$) are the first $TMF_1(3)$-Chern classes of the canonical line bundles $L_i$ (resp.\ $\overline{L}_i$) over the classifying spaces of the tori. 
\item
The classes $p_i$ freely generate  the $TMF_1(3)$-cohomology of $B{\mathit Spin}$, that is, 
$$ TMF_1(3)^*B{\mathit Spin}\cong TMF_1(3)^*[\! [p_1,p_2,\ldots ]\!].$$

\end{enumerate}
\end{thm}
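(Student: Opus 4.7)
The plan is to apply the pullback square of Theorem~\ref{main} to compatible pairs of classical Pontryagin classes in $K_{Tate}$-theory and rational cohomology, lifting them uniquely to $TMF_1(3)^*B\mathit{Spin}$, and then to deduce the free generation by transporting the classical splitting-principle descriptions at the other three corners through the pullback.

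For the construction in~(i), complexifying the tautological bundle $V$ over $B\mathit{Spin}(2m)$ and restricting to a maximal torus splits $V\otimes\mathbb{C}$ as $\bigoplus_{j=1}^m(L_j\oplus\overline{L}_j)$. The splitting principle for complex-oriented theories produces unique $K_{Tate}$-theoretic Pontryagin classes $P_i^{\mathrm{Tate}}\in K_{Tate}^{4i}B\mathit{Spin}$ whose restriction to each $BT$ is $\sigma_i(\rho^*(x_j^K\overline{x}_j^K))$, and rational cohomology supplies corresponding classes $P_i^{\QQ}\in H^{4i}(B\mathit{Spin},TMF_1(3)^*_\QQ)$ defined by the analogous symmetric function in the Dold-character images of $x_j,\overline{x}_j$. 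To apply the pullback, one checks that $\mathrm{ch}(P_i^{\mathrm{Tate}})$ and the scalar extension of $P_i^{\QQ}$ coincide in $H^*(B\mathit{Spin},K_{Tate,\QQ}^*)$. By naturality this reduces to the torus level, where it is built into Theorem~\ref{main}, whose commutativity on the first Chern classes $x_j,\overline{x}_j$ propagates to every symmetric polynomial in the products $x_j\overline{x}_j$. The pullback then delivers unique classes $p_i\in TMF_1(3)^{4i}B\mathit{Spin}$; applying the naturality of Theorem~\ref{main} to $BT\hookrightarrow B\mathit{Spin}$ then identifies $p_i|_{BT}=\sigma_i(\rho^*(x_j\overline{x}_j))$ in $TMF_1(3)^*BT$, completing~(i) including uniqueness.

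For~(ii), the classical splitting principle yields $K_{Tate}^*B\mathit{Spin}\cong K_{Tate}^*[\![P_1^{\mathrm{Tate}},P_2^{\mathrm{Tate}},\ldots]\!]$, $H^*(B\mathit{Spin},TMF_1(3)^*_\QQ)\cong TMF_1(3)^*_\QQ[\![P_1^{\QQ},P_2^{\QQ},\ldots]\!]$, and an analogous description of $H^*(B\mathit{Spin},K_{Tate,\QQ}^*)$. Since each graded piece is a finite sum of monomials in the generators, pullbacks commute with forming the power series rings, so the pullback corner of the Theorem~\ref{main} square reads $TMF_1(3)^*[\![p_1,p_2,\ldots]\!]$, with the coefficient identification $TMF_1(3)^* = K_{Tate}^*\times_{K_{Tate,\QQ}^*}TMF_1(3)^*_\QQ$ coming from the classical q-expansion principle for modular forms. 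The main technical hurdle is the formal-group bookkeeping in the compatibility check above, namely tracking that $\lambda$ identifies the $TMF_1(3)$-formal group with the multiplicative formal group of $K_{Tate}$ so that the symmetric functions in $x_j\overline{x}_j$ match across the square; the remainder is a formal consequence of Theorem~\ref{main} together with the two classical Pontryagin class computations.
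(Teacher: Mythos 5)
Your overall architecture is exactly the paper's: build compatible Pontryagin classes in the three known corners of the square of Theorem \ref{main}, pull them back, and deduce free generation from the corresponding classical statements, with uniqueness and the torus formula coming from Corollary \ref{torus} and naturality. Your part (ii) is also fine as stated: each corner is a power series ring on the respective classes, the coefficient identification is the classical $q$-expansion principle, and the pullback commutes with the degreewise-product power series construction.

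The one step you flag but do not close is the real content of the proof, and your framing of it is slightly off. The character $\lambda$ does \emph{not} identify the formal group of $TMF_1(3)$ with the multiplicative formal group of $K_{Tate}$: it identifies it with the formal group of the Tate curve in its standard coordinate, which is only \emph{strictly isomorphic} to the multiplicative one. Consequently, if $P_i^{\mathrm{Tate}}$ is defined by the classical formula (\ref{Pseries}) with $x=1-L$, then $\mathrm{ch}(P_i^{\mathrm{Tate}})$ does not equal the additive-Chern-class $P_i^{\QQ}$, and these classes are not the images of anything defined via the $TMF_1(3)$-orientation; so the compatibility check you defer to "naturality on first Chern classes" does not go through with the standard orientations. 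The compatible choice is to use in every corner the first Chern classes induced from a single $MU$-orientation of $TMF_1(3)$ (the Tate-curve coordinate in $K_{Tate}$, the Dold exponential rationally); then commutativity of the square is automatic, but one must still prove that the Pontryagin classes built from these \emph{nonstandard} Chern classes remain free topological generators. The paper closes this by invoking Miller's natural automorphism of $K_{Tate}$ exchanging the two orientations --- equivalently, the strict isomorphism of formal group laws induces an automorphism of $K_{Tate}^*BT$ carrying one system of Chern classes to the other, hence one generating system of Pontryagin classes to another --- together with the analogous rational statement, where all formal group laws are strictly isomorphic. With that argument inserted, your proof is complete.
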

Next we turn to the calculation of the characteristic classes for $B{\mathit String}$. Here it turns out that the $q$-expansion principle only holds for odd primes. A string characteristic class may not even be determined by its characters. However, we have the following result.
\begin{thm}\label{BString}
Let $\widehat{TMF}_1(3)$ be the $K(2)$-localization of $TMF_1(3)$ at the prime 2. Then there is an isomorphism of algebras
$$ \widehat{TMF}_1(3)^*B{\mathit String}\cong \widehat{TMF}_1(3)^*[\! [r,p_1,p_2,\ldots ]\!] $$
where $p_1,p_2,\ldots$ are the Pontryagin classes coming from $B{\mathit Spin}$ and $r$ restricts to a topological generator of degree 6 in the $K(2)$-cohomology of $K(\ZZ, 3)$. 
\end{thm}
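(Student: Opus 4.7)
The plan is to exploit the principal fibration
$$K(\ZZ,3) \longrightarrow B\mathit{String} \longrightarrow B\mathit{Spin}$$
classified by $p_1/2 \in H^4(B\mathit{Spin};\ZZ)$, and to run the associated Atiyah--Hirzebruch--Serre spectral sequence
$$E_2^{p,q} = H^p\bigl(B\mathit{Spin};\, \widehat{TMF}_1(3)^q K(\ZZ,3)\bigr) \Longrightarrow \widehat{TMF}_1(3)^{p+q} B\mathit{String}.$$
The base cohomology is identified by Theorem \ref{Pontryagin} (after $K(2)$-localization) with $\widehat{TMF}_1(3)^*[\![p_1,p_2,\ldots]\!]$, so the task reduces to computing the fiber cohomology, showing the spectral sequence collapses, and organizing the result as a power series algebra.

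The first step is to compute $\widehat{TMF}_1(3)^* K(\ZZ,3)$. Since $\widehat{TMF}_1(3)$ is a $K(2)$-local Landweber exact theory of height $2$ at the prime $2$ by Proposition \ref{ge2}, this reduces to the corresponding calculation of $K(2)^* K(\ZZ,3)$, which is the borderline Ravenel--Wilson case $q=n+1$ for $n=2$: the cohomology of this Eilenberg--MacLane space is topologically generated over $K(2)^*$ by a single class in degree $6$. Lifting through the Bockstein tower from $K(2)$ to $\widehat{TMF}_1(3) = \widehat{E(2)}$ then gives
$$\widehat{TMF}_1(3)^* K(\ZZ,3) \cong \widehat{TMF}_1(3)^*[\![r]\!], \qquad |r|=6.$$

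The second step is the collapse. The $E_2$-page is the completed tensor product of the two power series rings above, which is concentrated in even total degree: $\widehat{TMF}_1(3)^*$ lives in even degrees, $|p_i|=4i$ is even, and $|r|=6$ is even, so both $p$ and $q$ are even on any nonzero entry. A nontrivial differential $d_k$ would land in bidegree $(p+k,q-k+1)$, which can be in the even--even locus only if $k$ is simultaneously even and odd; hence all differentials vanish on parity grounds alone. In particular the transgression of $r$ would land in $H^7(B\mathit{Spin};\widehat{TMF}_1(3)^0)$, which is zero, so there is no obstruction from the classifying class $p_1/2$. Choosing any lift $r \in \widehat{TMF}_1(3)^6 B\mathit{String}$ of the fiber generator produces a filtration-preserving ring map
$$\widehat{TMF}_1(3)^*[\![r, p_1, p_2, \ldots]\!] \longrightarrow \widehat{TMF}_1(3)^* B\mathit{String},$$
which is an isomorphism on associated gradeds, and hence, both sides being complete and Hausdorff, an isomorphism of rings.

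The step I expect to be the main obstacle is the first: the height-$2$, prime-$2$ computation of $\widehat{TMF}_1(3)^* K(\ZZ,3)$ as a rank-one power series ring with generator in degree $6$. This is the point at which the rational $q$-expansion picture of Theorem \ref{main} is not available, and one must argue directly in terms of the height-$2$ formal group law, bootstrapping from a Ravenel--Wilson type input in $K(2)$-cohomology via the appropriate Bockstein spectral sequence. Once this input is secured, the remainder of the proof is driven by parity of degrees and the standard collapse and filtration arguments for the Serre spectral sequence.
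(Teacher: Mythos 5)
Your first step (the identification $\widehat{TMF}_1(3)^*K(\ZZ,3)\cong \widehat{TMF}_1(3)^*[\![r]\!]$ with $|r|=6$, lifted from the Ravenel--Wilson computation of $K(2)^*K(\ZZ,3)$ through the Bockstein tower) is sound and is exactly Proposition \ref{K(Z,3)} of the paper. But it is not the main obstacle; the gap is in your collapse argument. You define the $E_2$-page of the Serre spectral sequence as $H^p(B{\mathit Spin};\widehat{TMF}_1(3)^qK(\ZZ,3))$ and then assert it is the completed tensor product of $\widehat{TMF}_1(3)^*[\![p_1,p_2,\ldots]\!]$ with $\widehat{TMF}_1(3)^*[\![r]\!]$, hence concentrated in even total degree. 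That conflates the $E_2$-page (ordinary cohomology of the base with coefficients in the generalized cohomology of the fiber) with the abutment of an Atiyah--Hirzebruch-type spectral sequence. At the prime $2$ the group $H^*(B{\mathit Spin};\ZZ_{(2)})$ is \emph{not} even: for example $Sq^1w_6=w_7\neq 0$ in $H^*(B{\mathit Spin};\ZZ/2)$, so the integral Bockstein of $w_6$ is a nonzero $2$-torsion class in $H^7(B{\mathit Spin};\ZZ_{(2)})$. Thus the $E_2$-page has plenty of odd-degree entries, the parity argument for degeneration fails, and your specific claim that the transgression target $H^7(B{\mathit Spin};\widehat{TMF}_1(3)^0)$ vanishes is false. (There are also unaddressed convergence and $\lim^1$ issues for a nonconnective periodic theory over the infinite complex $B{\mathit Spin}$, but the evenness claim is the fatal one.) Proving that all these differentials nevertheless vanish is essentially equivalent to the hard computational input you are missing.

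The paper supplies that input from a different direction: the Kitchloo--Laures--Wilson theorem (Theorem \ref{KLW}) gives an exact sequence of Hopf algebras in Morava $K(2)$-\emph{homology} associated to $K(\ZZ/2,2)\to K(\ZZ,3)\to B{\mathit String}\to B{\mathit Spin}$, which splits algebraically and in particular shows $K(2)_*B{\mathit String}$ is even. From this one lifts the degree-$6$ generator first to $K(2)^*B{\mathit String}$ and then, using that $K(2)^*B{\mathit String}$ is the $I_2$-reduction of $E^*B{\mathit String}$, to a class $r\in E^*B{\mathit String}$; the resulting map $E^*B{\mathit Spin}\,\hat\otimes\,E^*[\![r]\!]\to E^*B{\mathit String}$ is then an isomorphism by the Hovey--Strickland pro-freeness result (Proposition \ref{HovStrick}) together with a graded Nakayama lemma. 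If you want to salvage a fibration-based argument you would have to import the KLW computation (or an equivalent bar-spectral-sequence analysis over $K(2)$) anyway, at which point the Serre spectral sequence is no longer doing any work.
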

Theorems  \ref{Pontryagin} and \ref{BString} solve the problem stated in \cite{MR2093483}:  they give  explicit generators for the cohomology rings of $B{\mathit Spin}$ and $B{\mathit String}$.  Their proofs  are given in Section \ref{SecBString}. Theorem \ref{BString} relies on a 
description of the Morava $K(2)$-homology of $B{\mathit String}$  given by  Kitchloo-Laures-Wilson in \cite{MR2093483, MR2086079}. We remark that  the completed $TMF_1(3)$ can be described as a fixed point spectrum of Lubin-Tate theory (see \cite{MR2508904}.) 
\par
As a first consequence we give a proof of the stable version of a conjecture by Brylinski which was stated  in \cite{MR1071369}. The precise formulation is given in the last section. For sufficiently large $n$ divisible by 3 as in Theorem \ref{coxeter}, there is a group homomorphism 
$$\varphi: (P_m)_{\Gamma(n)}\lra {TMF}(n )^*B{\mathit String}$$
 from a stable group of positive energy representations V of the  free loop group ${L}Spin$ to the  $TMF$-cohomology with level $n$-structure. Here, the congruence group is larger than the one considered before since the character of the representation is only known to be invariant under the action of $\Gamma(n)$ by a theorem of Kac and Wakimoto \cite[Theorem A]{MR954660}. \par
We will describe the map $\varphi$ in terms of its elliptic character. Suppose $P$ is a $String$-principal bundle over $X$. Let $\tilde{L}Spin$ be the universal central extension of $L{\mathit Spin}$.  Then by \cite{MR970288}, $LX$ carries a $\tilde{L}Spin$-principal bundle $\tilde{L}P$ whose associated $L{\mathit Spin}$-bundle is $LP$. In particular, this holds for the universal $String$-bundle $E{\mathit String}$  over $B{\mathit String}$. The elliptic character of $\varphi$ gives the bundle 
$$ \lambda \varphi(V)=(\tilde{L} E Spin\times_{\tilde{L}Spin}V)_{|B{\mathit String}}$$
(when $V$ is suitably normalized with a character of the rotation circle).
In this formula, the right hand side is considered as a formal power series of virtual bundles by decomposing the bundle as a representation of the circle group which reparameterizes the loops.
The evaluation of this class on the fundamental class of a string manifold is the formal index of the Dirac operator on $LM$ with coefficient in the bundle associated to the representation $V$ (see \cite{MR1071369}).
\subsubsection*{Acknowledgements.} The author would like to thank  Nitu Kitchloo, Martin Olbermann, Bj\"orn Schuster, Vesna Stojanoska, Neil Strickland and Steve Wilson for helpful discussions. He is also grateful to the referee for a very careful revision.

\section{Topological modular forms of level \3}\label{tmf}
In this section we describe the spectrum of topological modular forms with level structures. Most of the results are well known and  are collected for later reference. We start more generally than actually needed in order to put the main theorem in a broader picture.\par
Let ${\mathcal M}$ denote the stack of smooth elliptic curves. A morphism $f:S\lra {\mathcal M}$ determines an elliptic curve $C_f$ over the base scheme $S$ (see Deligne and Rapoport  \cite{MR0337993}.) The cotangent spaces of the identity section of each elliptic curve give rise to the line bundle $\omega$ of invariant differentials on ${\mathcal M}$.  A section of $\omega^{\otimes k}$ is called a modular form of weight $k$. We write $M_k$ for the group of modular forms of weight $k$. 
\par
Let $C$ be an elliptic curve over a ring $R$. Let $C[n]$ denote the kernel of the self-map which multiplies by $n$ on $C$. If $n$ is invertible in $R$, then \'{e}tale-locally, $C[n]$ is of the form $\ZZ/n\times \ZZ/n$. A choice of an isomorphism is called a $\Gamma(n)$-structure on $C$. A monomorphism $\ZZ/n\lra C[n]$ is a $\Gamma_1(n)$-structure. It corresponds to a choice of a point of exact order $n$. A choice of a subgroup scheme of $C[n]$ which is isomorphic to $\ZZ/n$ is called a $\Gamma_0(n)$-structure. There are maps of associated moduli stacks with $n$ inverted
\begin{eqnarray}\label{level}
&\xymatrix{ {\mathcal M}_{\Gamma(n)}\ar[r]& {\mathcal M}_{\Gamma_1(n)}\ar[r]& {\mathcal M}_{\Gamma_0(n)}\ar[r]& {\mathcal M}}&
\end{eqnarray}
and the sections of the associated  line bundles are called modular forms with the corresponding level structures. For example, a $\Gamma_1(n)$-modular form $f$ of weight $k$ associates an element of the ring $R$ to each triple $(C/R, \omega, P)$ where $C$ is an elliptic curve over $R=R[1/n]$, $\omega$ is a translation invariant nowhere vanishing differential and $P$ is a point of exact order $n$. This association should only depend on the isomorphism class of the triple, it should be invariant under base change, and  it should satisfy
$$ f(C,a \omega, P)=a^{-k} f(C, \omega, P)$$
 for all $a\in R^{\times}$. \par
 The case $n=3$ can be made more explicit. Locally, for any such triple $(C,\omega, P)$ the curve can uniquely be written in the form 
 \begin{eqnarray}\label{curve}
 & C: &y^2 +a_1xy +a_3 y = x^3
 \end{eqnarray}
 in a way that $P$ is the origin $(0,0)$ and $\omega$ has the standard form $\omega=dx/(2y+a_1x+a_3)$. A proof of this fact can be found  in \cite[3.2]{MR2508904}. It means that there is universal triple $(C,\omega, (0,0))$ over the ring $\ZZ[1/3, a_1,a_3,\Delta^{-1}]$ with the property that locally any other triple is obtained by base change. Hence a $\Gamma_1(3)$-modular form is determined by its value on the universal triple and gives an element in this ring. Moreover, any element in the ring gives a modular form with level structure. We get
 \begin{eqnarray}\label{mf}
  {M_{\Gamma_1(3)}}_*&\cong &\ZZ[1/3, a_1,a_3,\Delta^{-1}].
  \end{eqnarray}
 One should mention that all moduli problems for $\Gamma_1(n)$ and $\Gamma(n)$-structures with $n\geq 3$ are representable this way but $\Gamma_0(n)$ is not representable.
A good reference for these classical results on modular forms and level structures is the book \cite{MR772569}.\par
There is a derived version of these concepts as follows.
\begin{thm}\cite{MR2648680, HL13, AHR14}\label{tmfsheaf}
There is a sheaf ${\mathcal O}_{TMF}$ of $E_\infty$-ring spectra over ${\mathcal M}$ in the \'etale topology. This sheaf satisfies:
\begin{enumerate}
\item
The spectrum $TMF=\Gamma {\mathcal O}_{TMF}$ only has 2 and 3 torsion in homotopy. Away from 6 it is concentrated in even degrees and we have an isomorphism
$$ \pi_{2k}TMF[1/6]\cong M_k[1/6].$$
\item There is an orientation map $MString\lra TMF$ which induces the Witten genus in homotopy. In fact, its image coincides with the homotopy groups of a connective version of $TMF$. 
\item 
The sequence (\ref{level}) of moduli stacks gives a sequence of spectra
$$\xymatrix{ TMF(n)& TMF_1(n)\ar[l]& TMF_0(n)\ar[l]& TMF[1/n]\ar[l]}$$
and induced equivalences (with $n$ inverted)
\begin{eqnarray*}
TMF[1/n]&\cong &TMF(n)^{h \, Gl_2(\ZZ/n)}\\
TMF_0(n)&\cong &TMF_1(n)^{h \, Gl_1(\ZZ/n)}.
\end{eqnarray*}
\item
For all \'{e}tale $f: Spec( R ) \lra {\mathcal M}$ the spectrum $E=\Gamma f^*{\mathcal O}_{TMF}$ is a complex orientable ring spectrum whose formal group $E^0BS^1$ is equipped with an isomorphism to the formal completion of $C_f$. 
\end{enumerate} 
\end{thm}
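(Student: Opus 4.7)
The plan is to invoke the Goerss--Hopkins--Miller obstruction theory together with Lurie's derived algebraic geometry machinery, as assembled in \cite{MR2648680, HL13, AHR14}; the four assertions are then either immediate by construction or follow from standard descent computations. The deep input I would quote is the existence, for every \'etale affine $\mathrm{Spec}(R)\to\M$, of a functorial even-periodic $E_\infty$-ring $E$ whose formal group is $\hat C_f$. Landweber exactness provides this on the level of homotopy rings, and the obstruction-theoretic arguments of Goerss--Hopkins (in Lurie's streamlined formulation) show that the relevant derivation groups on the moduli of formal groups vanish, so the homotopy-level assignment lifts to an $E_\infty$-sheaf. With $\O_{TMF}$ in hand, property (iv) is immediate by construction.

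For (i), I would run the descent spectral sequence
$$H^s(\M,\omega^{\otimes t})\Longrightarrow \pi_{2t-s}TMF.$$
After inverting $6$ the automorphism groups of elliptic curves become invertible, so the sheaf cohomology of $\M[1/6]$ with values in $\omega^{\otimes k}$ is concentrated in degree $0$ and equals $M_k[1/6]$; the spectral sequence collapses and gives $\pi_{2k}TMF[1/6]\cong M_k[1/6]$. The statement that torsion can occur only at the primes $2$ and $3$ reduces to checking that higher cohomology of $\omega^{\otimes \bullet}$ on $\M$ is supported at these primes, which follows from the fact that only at $2$ and $3$ does $\M$ possess supersingular points with automorphism groups of order divisible by the prime.

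For property (ii), I would invoke the $\sigma$-orientation of Ando--Hopkins--Rezk, which refines the Witten genus to an $E_\infty$-map $MString\to tmf$; the homotopy-level image is precisely the connective piece by design, so composition with the periodic completion gives the map to $TMF$. For (iii), the key point is that for $n\geq 3$ the moduli stacks $\M_{\Gamma_1(n)}$ and $\M_{\Gamma(n)}$ (with $n$ inverted) are representable by affine schemes, and the natural maps in (\ref{level}) are finite \'etale Galois covers with groups $Gl_1(\ZZ/n)$ and $Gl_2(\ZZ/n)$ respectively. Applying $\Gamma$ to the pulled-back sheaf and using the homotopy-fixed-point form of faithfully flat descent for $\O_{TMF}$ yields the two claimed equivalences.

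The principal obstacle in this program is the $E_\infty$-rigidification step, i.e.\ lifting the Landweber assignment to a sheaf of $E_\infty$-ring spectra; everything else is bookkeeping via descent and the explicit presentation (\ref{curve}) of elliptic curves with $\Gamma_1(3)$-level structure. Since this rigidification is already accomplished in the cited references, for the purposes of the present paper the result should be treated as a black box, and my task reduces to packaging these statements in the form needed for the computations of Section \ref{proof thm}.
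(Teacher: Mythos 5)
The paper offers no proof of this theorem at all: it is stated purely as a citation of \cite{MR2648680, HL13, AHR14}, exactly as you conclude it should be treated. Your sketch of how those references establish the four properties (Goerss--Hopkins--Miller rigidification, the descent spectral sequence with vanishing of higher cohomology after inverting $6$, the Ando--Hopkins--Rezk $\sigma$-orientation, and finite \'etale Galois descent for the level covers) is an accurate account of the standard arguments, so your treatment matches the paper's.
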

The spectrum $TMF_1(3)$ can be described much more elementarily if one is only interested in its associated cohomology theory. Since the moduli problem is representable, its homotopy coincides with the ring $M_{\Gamma_1(3)}$ by property (iv) as we now explain. (The ring $M_{\Gamma_1(3)}$ has been determined in (\ref{mf}).) When we choose a coordinate on the formal group of the curve (\ref{curve}) we obtain a formal group law. These are classified by a map from the Lazard ring $L$, which coincides with the homotopy groups of complex bordism $MU$. For instance, the 2-typicalization of the standard coordinate has  Hazewinkel generators (see \cite[Lemma 1]{MR2076927}) at $p=2$
\begin{eqnarray}\label{v1}
v_1&=&a_1\\\label{v2}v_2&=&a_3.
\end{eqnarray} 
The Hazewinkel generators form a regular sequence and hence satisfy the Landweber exactness conditions. 
Thus 
for finite complexes $X$ we have natural isomorphisms
\begin{eqnarray}\label{LE}
TMF_1(3)^*X&\cong &MU^*X\otimes_{MU^*}M_{\Gamma_1(3)}.
\end{eqnarray}
(In  the older literature  for example  \cite{MR1071369,  MR1235295, MR1271552} this theory carried the names $Ell^{\Gamma_1(3)}$ or $E^{\Gamma_1(3)}$.)\begin{lemma}\label{e2}
The map 
$$\xymatrix{ \ZZ_{(2)}[v_1,v_2^{\pm 1},(v_1^3-27v_2)^{-1}] \ar[r]& {TMF_1(3)_{(2)}}_*}$$
is an isomorphism.
\end{lemma}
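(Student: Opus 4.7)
The plan is to combine the description of $M_{\Gamma_1(3)*}$ from (\ref{mf}) with Landweber exactness (\ref{LE}) to identify the coefficient ring of $TMF_1(3)_{(2)}$, and then to rewrite the localization variables using the explicit discriminant of the universal Weierstrass curve (\ref{curve}).

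First, I would observe that by Landweber exactness and representability, $TMF_1(3)_* \cong M_{\Gamma_1(3)*} \cong \ZZ[1/3, a_1, a_3, \Delta^{-1}]$, where $\Delta$ is the discriminant of the universal curve $C: y^2+a_1xy+a_3y=x^3$. After localizing at $2$ the factor $\ZZ[1/3]$ is absorbed into $\ZZ_{(2)}$, so
\[
TMF_1(3)_{(2)*} \;\cong\; \ZZ_{(2)}[a_1, a_3, \Delta^{-1}].
\]
By (\ref{v1})--(\ref{v2}), the $2$-typicalization of the standard coordinate on the formal group of $C$ has Hazewinkel generators $v_1 = a_1$ and $v_2 = a_3$, so rewriting $a_1, a_3$ as $v_1, v_2$ is just a relabeling.

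Second, I would compute $\Delta$ for the curve $C$ directly from Tate's formulas with $a_2=a_4=a_6=0$. A short calculation gives $b_2 = a_1^2$, $b_4 = a_1 a_3$, $b_6 = a_3^2$, $b_8 = 0$, hence
\[
\Delta \;=\; -8b_4^3 - 27 b_6^2 + 9 b_2 b_4 b_6 \;=\; -8 a_1^3 a_3^3 - 27 a_3^4 + 9 a_1^3 a_3^3 \;=\; a_3^3\bigl(a_1^3 - 27 a_3\bigr).
\]
In terms of Hazewinkel generators this reads $\Delta = v_2^3(v_1^3-27 v_2)$.

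Third, inverting $\Delta$ is equivalent to inverting both factors $v_2$ and $v_1^3-27 v_2$, so
\[
\ZZ_{(2)}[a_1, a_3, \Delta^{-1}] \;=\; \ZZ_{(2)}\bigl[v_1, v_2^{\pm 1}, (v_1^3-27v_2)^{-1}\bigr],
\]
which is the claimed isomorphism. The only nontrivial step is the discriminant calculation; everything else is bookkeeping using facts already recorded in Theorem \ref{tmfsheaf} and (\ref{mf}).
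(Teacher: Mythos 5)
Your proof is correct and follows the same route as the paper: identify $TMF_1(3)_{(2)*}$ with $\ZZ_{(2)}[a_1,a_3,\Delta^{-1}]$ via (\ref{mf}), substitute $v_1=a_1$, $v_2=a_3$ from (\ref{v1})--(\ref{v2}), and use $\Delta=a_3^3(a_1^3-27a_3)$. You additionally carry out the discriminant computation (with $b_8=0$ so the $-b_2^2b_8$ term drops), which the paper merely asserts; this is a welcome extra check and the result agrees.
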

\begin{proof}
The discriminant of the universal curve has the form
$$ \Delta = a_3^3(a_1^3-27a_3).$$
The result follows from (\ref{v1}) and (\ref{v2}).
\end{proof}
Let $w_k\in \pi_*MU$ denote any element which projects to a generator of the indecomposables in $\pi_{2(p^k-1) }BP$. Recall from \cite[Definition 3.9]{Ma13} that an $MU$-module spectrum $M$ is a form of $BP\langle n\rangle  $ if there are $w_{n+1}, w_{n+2}, \ldots$ such that 
$$ M\cong BP/(w_{n+1}, w_{n+2}, \ldots).$$ In the sequel,  we will  call these  theories generalized  $BP\langle n\rangle $ and will use the same notation.
\begin{definition}
An $MU$-module spectrum $M$ is a generalized $E(n)$ if there exist a generalized $BP\langle n\rangle$ and an element $w\in \pi_*MU$ such that
$$  BP\langle n\rangle [w^{-1}] \cong M$$ and the element $w$  coincides with a power of $v_n$ modulo the ideal $(p,v_1,\ldots , v_{n-1})$.
\end{definition}
\begin{prop}\label{ge2}
Locally at the prime 2, the spectrum $TMF_1(3)$ is a generalized Johnson-Wilson theory $E(2)$.
\end{prop}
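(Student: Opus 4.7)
The plan is to exhibit $TMF_1(3)_{(2)}$ as $BP\langle 2\rangle[w^{-1}]$ where $w$ is a lift to $\pi_*MU$ of the discriminant $\Delta = v_2^3(v_1^3 - 27v_2)$ of the universal $\Gamma_1(3)$-curve (\ref{curve}). I would begin by constructing the needed generalized $BP\langle 2\rangle$ as an $MU$-module spectrum: starting from $BP_{(2)}$, choose lifts $w_k \in \pi_{2(2^k-1)}MU$ of the Hazewinkel generators $v_k$ for $k \geq 3$ and form the iterated cofibre $BP/(w_3, w_4, \ldots)$. By \cite[Def.\ 3.9]{Ma13} this is a generalized $BP\langle 2\rangle$, and its homotopy is the polynomial ring $\ZZ_{(2)}[v_1, v_2]$ on the first two Hazewinkel generators.

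Next I would compare $BP\langle 2\rangle[w^{-1}]$ with $TMF_1(3)_{(2)}$. By Lemma \ref{e2} the two spectra have identical homotopy rings, namely $\ZZ_{(2)}[v_1, v_2^{\pm 1}, (v_1^3-27v_2)^{-1}]$, and both carry the 2-typification of the formal group law of the universal $\Gamma_1(3)$-curve. I would then check Landweber exactness: on $R := \ZZ_{(2)}[v_1, v_2^{\pm 1}, (v_1^3-27v_2)^{-1}]$, multiplication by $v_1$ is injective on $R/2$, multiplication by $v_2$ is injective on $R/(2, v_1)$ (it is in fact a unit there), and the higher conditions are vacuous because $R/(2, v_1, v_2) = 0$. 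The spectrum $TMF_1(3)_{(2)}$ is also Landweber exact, directly by (\ref{LE}). Since both spectra are Landweber exact $MU$-modules with the same homotopy and formal group, they represent the same homology theory and are equivalent.

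Finally, I need to verify that $w$ satisfies the condition in the definition of a generalized $E(2)$. Working modulo $(2, v_1)$ we have $w = v_2^3(v_1^3 - 27v_2) \equiv -27\, v_2^4 \equiv v_2^4$ since $-27 \equiv 1 \pmod 2$, so $w$ reduces to the fourth power of $v_2$. The main obstacle in carrying out this plan is the comparison step: producing an equivalence $BP\langle 2\rangle[w^{-1}] \simeq TMF_1(3)_{(2)}$ rigorously in the category of $MU$-modules, not merely as complex-oriented cohomology theories. This can be addressed either by invoking a uniqueness result for Landweber-exact realizations (in the spirit of Hovey--Strickland) or by constructing an explicit $MU$-module map classifying the formal group of (\ref{curve}) through $BP\langle 2\rangle$ and verifying on homotopy that it becomes an isomorphism after inverting $w$.
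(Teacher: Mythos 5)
There is a genuine gap in your first step, and it propagates into the comparison. If you form $BP/(w_3,w_4,\ldots)$ by killing lifts of the Hazewinkel generators themselves, the resulting generalized $BP\langle 2\rangle$ has homotopy $\ZZ_{(2)}[v_1,v_2]$ carrying the formal group law obtained from the universal $2$-typical one by setting $v_3=v_4=\cdots=0$. That is \emph{not} the $2$-typification of the formal group law of the curve (\ref{curve}): the classifying map $BP_*\to \ZZ_{(2)}[a_1,a_3]$ of the latter sends $v_1\mapsto a_1$ and $v_2\mapsto a_3$, but the higher Hazewinkel generators go to (in general nonzero) polynomials in $a_1,a_3$, not to zero. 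So your sentence ``both carry the 2-typification of the formal group law of the universal $\Gamma_1(3)$-curve'' fails for your $BP\langle 2\rangle[w^{-1}]$; what you have built is the standard Johnson--Wilson theory with $v_1^3-27v_2$ inverted. It has the same coefficient ring as $TMF_1(3)_{(2)}$ but a different $MU_*$-algebra structure, hence represents a different Landweber exact theory and cannot be $MU$-module equivalent to $TMF_1(3)_{(2)}$ (such an equivalence would have to carry $v_3\cdot 1=0$ to $v_3\cdot 1\neq 0$). No uniqueness theorem for Landweber exact realizations closes this gap, because its hypothesis --- agreement of the formal groups --- is exactly what is violated.

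The repair is small but essential: instead of killing the Hazewinkel generators, kill a generating regular sequence for the kernel of the classifying map $BP_*\to \ZZ_{(2)}[a_1,a_3]$ of the $2$-typicalized curve. Writing $v_k\mapsto f_k(a_1,a_3)$ for $k\ge 3$, the elements $w_k=v_k-f_k(v_1,v_2)$ are new polynomial generators of $BP_*$, each congruent to $v_k$ modulo decomposables, so $BP/(w_3,w_4,\ldots)$ is a generalized $BP\langle 2\rangle$ in the sense of \cite[Definition 3.9]{Ma13} whose homotopy is $\ZZ_{(2)}[a_1,a_3]$ with the correct formal group. After this correction the rest of your plan does go through: your Landweber exactness check is fine as written, the congruence $\Delta=v_2^3(v_1^3-27v_2)\equiv v_2^4 \bmod (2,v_1)$ is correct, and two Landweber exact $MU$-modules with the same $MU_*$-algebra of coefficients represent the same functor and are therefore equivalent. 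With that fix your route is more self-contained than the paper's, which simply cites \cite{MR2897051} or \cite[Example 3.11]{Ma13} for a connective form $tmf_1(3)$ realizing $BP\langle 2\rangle$ and then applies Lemma \ref{e2}; those references in effect carry out the corrected construction, together with multiplicative structure that the present proposition does not require.
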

\begin{proof}
Use  \cite{MR2897051} or \cite[Example 3.11]{Ma13} to get a connective version of $TMF_1(3)$ which is a generalized  $BP\langle 2\rangle  $. The claim then follows from Lemma \ref{e2}.
\end{proof}
Theorem \ref{main} involves the elliptic character map. This map originates from  the Tate curve
$$ y^2+xy=x^3+B(q)x+C(q)$$
where 
$$ B(q)=\frac{-1}{48}(E_4(q)-1) \mbox{ and } \; C(q)=\frac{1}{496}(E_4(q)-1)-\frac{1}{864}(E_6(q)-1)$$
with the Eisenstein series $E_4$ and $E_6$. The series $B$ and $C$ are integral power series in $q$.
The evaluation of an ordinary modular form on the Tate curve with its canonical differential corresponds to its $q$-expansion. The formal group associated to the Tate curve is the multiplicative formal group. In order to get a $\Gamma_1(n)$-structure, that is, a point of order $n$,  one can use the extension of scalars $\ZZ((q))\rightarrow \ZZ((q)); \; q\mapsto q^n$. The resulting curve is usually denoted by $Tate(q^n)$ and its multiplicative reduction furnishes the Miller character map \cite{MR1022688}\cite{MR1660325}
$$\lambda: TMF_1(n) \lra K[1/n]((q)).$$
In homotopy this map is the classical $q$-expansion. 
\begin{lemma}\label{ff}
Locally at $p=2$, the map $$\lambda_*X: {TMF_1(3)}[a_1^{-1}]_*X \lra K[1/3]((q))_*X$$ is a monomorphism for all  $X$.
\end{lemma}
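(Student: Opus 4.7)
The plan is to use Landweber exactness of both $TMF_1(3)[a_1^{-1}]$ and $K[1/3]((q))$ at $p=2$ in order to reduce the lemma to a purity statement about the coefficient map $\lambda_*$, which in turn rests on the classical integrality form of the $q$-expansion principle. By Lemma \ref{e2}, the coefficient ring $A := TMF_1(3)_{(2)}[a_1^{-1}]_* = \ZZ_{(2)}[v_1^{\pm}, v_2^{\pm}, (v_1^3 - 27v_2)^{-1}]$ satisfies the Landweber criterion: the sole nontrivial check (that $v_1$ act regularly on $A/2$) is immediate because $v_1$ is a unit, and since $A/v_1 = 0$ the higher conditions are vacuous. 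Likewise $B := K[1/3]((q))_*$ is Landweber exact as a flat coefficient extension of $K$-theory. Consequently, for every space $X$ we have natural isomorphisms
$$ TMF_1(3)[a_1^{-1}]_*X \cong MU_*X \otimes_{MU_*} A, \qquad K[1/3]((q))_*X \cong MU_*X \otimes_{MU_*} B, $$
under which $\lambda_*X$ is identified with $1 \otimes \lambda_*$ obtained by base change from the coefficient map.

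The lemma is therefore equivalent to showing that $\lambda_* \colon A \to B$ is a universally injective (pure) map of $MU_*$-modules, equivalently that the cokernel $B/A$ is $MU_*$-flat. On coefficients, $\lambda_*$ is the classical $q$-expansion map for $\Gamma_1(3)$-modular forms (composed with the localization at $a_1$), hence injective by the classical $q$-expansion principle; injectivity is clearly preserved by inverting $a_1$. Since $\lambda$ is a map of complex-oriented ring spectra, the cokernel $B/A$ inherits an $MU_*MU$-comodule structure, so by Landweber's theorem its flatness over $MU_*$ is equivalent to regularity of the sequence $(2, v_1, v_2, \ldots)$ on $B/A$.

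The key simplification is that $v_1 = a_1$ acts as a unit on both $A$ and $B$, and hence on $B/A$, so $(B/A)/v_1 = 0$ and the Landweber conditions for $n \geq 1$ are vacuous. The entire argument thus reduces to verifying that multiplication by $2$ is injective on $B/A$, i.e.\ that $A$ is $2$-saturated in $B$: whenever $b \in B$ satisfies $2b \in A$, already $b \in A$. This saturation is the main obstacle; it is the integrality form of the classical $q$-expansion principle for $\Gamma_1(3)$-modular forms at $p=2$, which asserts that a $2$-integral $q$-series coinciding with the expansion of a rational $\Gamma_1(3)$-modular form (with $a_1$ inverted) is already the expansion of a form defined over $\ZZ_{(2)}[a_1^{-1}]$. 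Saturation is preserved under localization at $a_1$ because localization is exact and commutes with the formation of cokernels, so this classical input, combined with the reduction above, completes the proof.
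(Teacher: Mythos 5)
Your argument is correct and is essentially the paper's proof in a different packaging: both reduce, via Landweber exactness and the invertibility of $v_1=a_1$ in source and target, to injectivity of the $q$-expansion map on coefficients integrally and mod $2$ (the paper runs an explicit induction over a Landweber filtration of $BP_*X$ with quotients $BP_*/I_t$, which is exactly the standard mechanism by which your regularity condition on $B/A$ kills $\Tor_1^{MU_*}(MU_*X,B/A)$). One imprecision worth fixing: the Landweber criterion for $B/A$ is \emph{not} equivalent to flatness of $B/A$ over $MU_*$, nor is purity of $A\hookrightarrow B$ equivalent to flatness of the cokernel --- the criterion only gives exactness of $-\otimes_{MU_*}(B/A)$ on $MU_*MU$-comodules, which is weaker than flatness but is all you need here since $MU_*X$ is such a comodule (and the comodule structure on $B/A$ itself plays no role).
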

\begin{proof}
Note that the $q$-expansion  of $a_1$ starts with 1 and hence is invertible in the target. We will use Equation  (\ref{LE}) and replace $MU$ with $BP$.
Every comodule is the inductive limit of its finitely generated subcomodules. Hence, we may assume that  
 $BP_*X$ has a finite Landweber filtration $(F_k)$ with subsequent quotients of the form $BP_*/I_t$ with  $I_t$  the invariant prime ideal $(p,v_1,\ldots ,v_{t-1})$. By the $q$-expansion principle the character map $\lambda_*$ is injective and injective mod p. Hence it is injective when tensored with each of the quotients $BP_*/I_t$. The claim follows from the obvious inductive argument.
\end{proof}
\begin{remark}
It would be interesting to set up character maps for finer structures like $\Gamma_0(n)$. We will come back to this question in a subsequent work.
\end{remark}
\section{The $TMF_1(3)$-cohomology of $B{\mathit Spin}$}\label{proof thm}
In this section we will show the universal coefficient isomorphism for the $TMF_1(3)$-cohomology of the space $B{\mathit Spin}$ and the pullback diagram of Theorem \ref{main}. The main ingredient is the Morava $K(2)$-homology which has been computed by Kitchloo-Laures  in \cite{MR1909866}. We can use this result to obtain information about the $E(k,n)$-cohomology by methods of Ravenel-Wilson-Yagita. A chromatic argument enables us to compute the universal coefficients spectral sequence for $TMF_1(3)^*B{\mathit Spin}$. We will show that it collapses at the $E_2$-term and obtain  Theorem \ref{main} from the classical $q$-expansion principle. 
Some of the results of this section apply to other situations and hence are formulated more generally than actually needed. 
\par
From now on we fix a prime $p$. In a first step we do not want to deal with $\lim^1$-questions and hence work with $p$-completed spectra.  
Let $E(n)$ be a $p$-completed generalized Johnson-Wilson spectrum. Let $I_k$ be the invariant prime ideal $(p,v_1,\ldots ,v_{k-1})$ and let $E(k,n)$ be the spectrum $$E(k,n)=E(n)/I_k.$$ By definition we have 
$ E(n,n)\cong K(n)$, $E(0,n)=E(n)$ and there are cofibre sequences
\begin{eqnarray}\label{csekn}
\xymatrix@R=0.6pc{E(k-1,n)\ar[r]^{v_{k-1}} & E(k-1,n)\ar[r] & E(k,n) }.
\end{eqnarray}
\begin{remark}\label{ffe}
Recall from  Hovey and Sadofsky \cite[Theorem 3.4]{MR1722151} that for all generalized $K(n)$, there is a faithfully flat extension of its coefficient ring  over which its formal group law becomes strictly isomorphic to the Honda formal group law. This allows us to carry over results from the classical to the generalized $K(n)$.
\end{remark}

In the following, for a fixed $0<k\leq n$, let $X$ be a space with even $E(k,n)$-cohomology. The exact sequence induced by (\ref{csekn})
\begin{equation}\label{exseq}
\begin{aligned}
\xymatrix@R=0.6pc{0\ar[r]  &E(k-1,n)^{ev}X\ar[r]^{v_{k-1}} & E(k-1,n)^{ev}X\ar[r] & E(k,n)^{ev}X &\\ \ar[r]& E(k-1,n)^{odd}X\ar[r]^{v_{k-1}} & E(k-1,n)^{odd}X\ar[r]   &0}
\end{aligned}
\end{equation}
tells us that each element in $E(k-1,n)^{odd}X$ is infinitely divisible by $v_{k-1}$. The following result applies:
\begin{thm}\label{RWY}
If $x$ is infinitely divisible by $v_{k}$ in $E(k,n)^*X$, then it is zero.
\end{thm}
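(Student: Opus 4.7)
The plan is to show that $E(k,n)^*X$ is $v_k$-adically separated—equivalently, that $\bigcap_m v_k^m E(k,n)^*X = 0$—by reducing via a skeletal filtration to finite CW complexes, where Krull's intersection theorem applies, and then passing back to $X$ through the associated Milnor inverse limit. The parity hypothesis plays the key role in this last step.

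First, I would write $X$ as the colimit of its finite skeleta $X^{(N)}$. For each $N$, $E(k,n)^*X^{(N)}$ is a finitely generated module over the graded Noetherian ring $E(k,n)_* = \mathbb{F}_p[v_k,\ldots,v_n,v_n^{-1}]$ (or a mild variant in the generalized case). Since $v_k$ is a non-zero-divisor in this ring, Krull's intersection theorem gives $\bigcap_m v_k^m E(k,n)^*X^{(N)} = 0$. The Milnor sequence
$$0 \longrightarrow {\textstyle\lim_N^1}\, E(k,n)^{*-1}X^{(N)} \longrightarrow E(k,n)^*X \longrightarrow \lim_N E(k,n)^*X^{(N)} \longrightarrow 0$$
then relates the global cohomology to the skeleton-wise data. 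Since infinite $v_k$-divisibility is preserved under restriction, any $x \in \bigcap_m v_k^m E(k,n)^*X$ restricts to zero on every skeleton; provided the $\lim^1$ vanishes, this forces $x = 0$.

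The main obstacle is the $\lim^1$ vanishing for the skeletal inverse system. The parity hypothesis ensures $E(k,n)^{\mathrm{odd}}X = 0$ in the limit, but odd classes may appear at finite skeleta and die only in the colimit, so Mittag-Leffler is not automatic. I would handle this by descending induction along the chromatic tower
$$E(k,n) \longrightarrow E(k+1,n) \longrightarrow \cdots \longrightarrow E(n,n) = K(n),$$
using the short exact sequences coming from (\ref{csekn}) (already exploited in the excerpt) to propagate Mittag-Leffler from $K(n)^*X^{(N)}$ up through each intermediate $E(j,n)^*X^{(N)}$: at each stage, the even/odd pieces of $E(j,n)^*X^{(N)}$ are controlled by the cokernel and kernel of $v_j$ on $E(j-1,n)^*X^{(N)}$, and surjectivity of restrictions at level $j$ is equivalent, modulo parity bookkeeping, to surjectivity at level $j-1$. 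The base case of $K(n)$ is handled by Remark \ref{ffe}, which lets us invoke classical results on the $K(n)$-cohomology of spaces with even $K(n)$-cohomology after a faithfully flat extension. Combining these inputs gives $\lim^1 = 0$ and hence the theorem.
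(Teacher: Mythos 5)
Your overall strategy---restrict to finite subcomplexes, settle the finite case by commutative algebra, then pass back through the Milnor sequence---is the same as the paper's, which follows Ravenel--Wilson--Yagita. The genuine gap is in the finite case. For a finitely generated module $M$ over a Noetherian ring, Krull's intersection theorem only says that every element of $\bigcap_m v_k^m M$ is annihilated by $1-a$ for some $a$ in the ideal $(v_k)$; to conclude that the intersection vanishes you must know that $1-a$ acts injectively on $M$, and ``$v_k$ is a non-zero-divisor in the coefficient ring'' does not give this. The trouble is that $E(k,n)_*\cong{\mathbb F}_p[v_k,\dots,v_{n-1},v_n^{\pm 1}]$ is periodic, so $(v_k)$ contains homogeneous elements of degree $0$: for $p=2$, $n=2$, $k=1$ take $v_1^3v_2^{-1}$, and observe that the finitely generated graded module $M=E(1,2)_*/(v_2-v_1^3)\cong{\mathbb F}_2[v_1^{\pm 1}]$ has every element infinitely $v_1$-divisible. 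So finite generation over a Noetherian ring is not enough; one must use the extra structure that $E(k,n)^*X^{(N)}$ actually carries. That is exactly what the paper does: for finite $X$ it takes a Landweber filtration of $P(k)^*X$ with subquotients $P(k)^*/I_s$, tensors up to $E(k,n)^*$ to get a finite filtration with subquotients $E(k,n)^*/(v_k,\dots,v_s)$, and checks (by a short induction up the filtration) that such modules contain no nonzero infinitely $v_k$-divisible elements. Your appeal to Krull has to be replaced by this, or by some equivalent use of the comodule structure.

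The second half of your argument is also shakier than presented. The base case of your chromatic induction holds, but for a reason you do not use: $K(n)^*X^{(N)}$ is degreewise finite-dimensional over the graded field $K(n)_*$, so the descending chains of images stabilize and the tower is Mittag--Leffler with no evenness hypothesis (the skeleta $X^{(N)}$ need not have even $K(n)$-cohomology even when $X$ does, so the evenness results you want to quote via Remark \ref{ffe} do not apply to them anyway). The inductive step, however, is not justified: Mittag--Leffler for $\{E(j,n)^*X^{(N)}\}_N$ does not follow formally from Mittag--Leffler for the system modulo $v_j$, and the coefficient rings are not $v_j$-adically complete, so the usual lifting arguments are unavailable. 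This reduction from $X$ to its finite subcomplexes is precisely the step the paper delegates to \cite[Corollary 4.11]{MR1648284}; a self-contained treatment would need substantially more than ``parity bookkeeping'' here.
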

\begin{proof}
The proof for 
\cite[Corollary 4.11]{MR1648284} verbatim carries over to the generalized $E(k,n)$. In more detail,  one first observes that $x$ is infinitely divisible when restricted to each finite subcomplex of $X$. Hence one can assume that $X$ is finite. Then for $P(k)=BP/I_k$  one tensors a Landweber filtration of $P(k)^*X$ with the generalized $E(k,n)^*$ to reduce the claim to modules of the form $E(k,n)^*/(v_k, \ldots v_s)$ for some $s<n$. Here, $x$ has to be zero.
\end{proof}
\begin{cor}\label{even}
If $X$ is a space with even Morava $K(n)$-cohomology then $E(k,n)^*X$ is even for all $k$ and the exact sequences (\ref{exseq}) are short exact. 
\end{cor}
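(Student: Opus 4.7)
\textbf{Proof plan for Corollary \ref{even}.} The natural strategy is downward induction on $k$, starting from the hypothesis at $k=n$ and propagating evenness through the cofibre sequences (\ref{csekn}) one Bockstein at a time. The base case $k=n$ is immediate, since $E(n,n)=K(n)$ and we have assumed that $K(n)^{*}X$ is concentrated in even degrees.

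For the inductive step, suppose that $E(k,n)^{*}X$ is even. The cofibre sequence $E(k-1,n)\xrightarrow{v_{k-1}}E(k-1,n)\to E(k,n)$ induces a long exact sequence in cohomology, and because $|v_{k-1}|=2(p^{k-1}-1)$ is even, multiplication by $v_{k-1}$ preserves parity. Splitting the long exact sequence by parity and using the inductive hypothesis $E(k,n)^{\mathrm{odd}}X=0$, one extracts the six-term sequence
\begin{equation*}
0\to E(k-1,n)^{\mathrm{ev}}X\xrightarrow{v_{k-1}} E(k-1,n)^{\mathrm{ev}}X\to E(k,n)^{\mathrm{ev}}X\to E(k-1,n)^{\mathrm{odd}}X\xrightarrow{v_{k-1}} E(k-1,n)^{\mathrm{odd}}X\to 0
\end{equation*}
which is exactly (\ref{exseq}); in particular, multiplication by $v_{k-1}$ is surjective on $E(k-1,n)^{\mathrm{odd}}X$.

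Next I would promote this one-step divisibility to infinite divisibility: given any $x\in E(k-1,n)^{\mathrm{odd}}X$, surjectivity of $v_{k-1}$ produces $y_1$ with $x=v_{k-1}y_1$, then $y_1=v_{k-1}y_2$, and so on, so that $x\in\bigcap_{j\ge 1} v_{k-1}^{\,j}\,E(k-1,n)^{\mathrm{odd}}X$. Now invoke Theorem \ref{RWY} (with $k$ replaced by $k-1$, noting that $v_{k-1}$ is a genuine nonzero element of $E(k-1,n)_{*}=E(n)_{*}/I_{k-1}$) to conclude $x=0$. Hence $E(k-1,n)^{\mathrm{odd}}X=0$, completing the induction and reducing the six-term sequence to the asserted short exact sequence
\begin{equation*}
0\to E(k-1,n)^{\mathrm{ev}}X\xrightarrow{v_{k-1}} E(k-1,n)^{\mathrm{ev}}X\to E(k,n)^{\mathrm{ev}}X\to 0.
\end{equation*}

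The only nontrivial input is Theorem \ref{RWY}; the rest is purely formal manipulation of the Bockstein long exact sequence, so I expect no real obstacle beyond checking that $v_{k-1}$ has even degree and that the inductive hypothesis is applied at the correct index.
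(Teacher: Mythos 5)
Your proof is correct and follows essentially the same route as the paper: downward induction on $k$, using the parity-split long exact sequence of (\ref{csekn}) to show that $v_{k-1}$ acts surjectively on $E(k-1,n)^{\mathrm{odd}}X$, hence that every odd-degree element is infinitely divisible by $v_{k-1}$, and then invoking Theorem \ref{RWY} to conclude that the odd part vanishes. You have merely written out explicitly the induction that the paper compresses into one sentence, so there is nothing to add.
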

\begin{proof}
By an inductive argument we may assume that all elements of $E(k-1,n)^{odd}X$ are infinitely divisible by $v_{k-1}$. Then the claim follows from Theorem \ref{RWY}.
\end{proof}
Next we look at the universal coefficient spectral sequence for $E=E(n)$
$$\Ext_{E_*}^{s,t}(E_*X, E_*)\Longrightarrow E^{s+t}X$$

\begin{lemma}\label{gl-dim}
The global dimension of  $E_*=E(n)_*$ in the category of graded modules equals $n$.
\end{lemma}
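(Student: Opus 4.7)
The plan is to identify the coefficient ring $E(n)_*$ as a regular Noetherian graded ring of Krull dimension $n$, and then invoke (or reprove) the graded version of the Auslander–Buchsbaum–Serre theorem identifying global and Krull dimension.

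First I would pin down the structure of the coefficient ring. By the definition of generalized Johnson–Wilson spectra, $E(n)$ arises as $BP\langle n\rangle[w^{-1}]$ for some $w\in\pi_*MU$ that is a unit multiple of a power of $v_n$ modulo $I_n$. The coefficients of the $p$-complete generalized $BP\langle n\rangle$ are a (complete) polynomial ring in a regular sequence extending $p,v_1,\ldots,v_{n-1}$ by one further generator on which $w$ depends invertibly after localization. Thus $E(n)_*$ is (graded) Noetherian and regular, with maximal homogeneous ideals of height $n$; the longest chain of invariant primes $0\subset(p)\subset(p,v_1)\subset\cdots\subset(p,v_1,\ldots,v_{n-1})$ already realizes this height since $v_n$ becomes a unit. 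Remark~\ref{ffe} lets us pass, if needed, to a faithfully flat extension where the formal group is Honda and the ring is literally of the classical form, without changing global dimension.

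For the upper bound I would exhibit an explicit projective resolution of length $n$ for any finitely generated graded $E_*$-module. The key case is $E_*/I_n$: since $(p,v_1,\ldots,v_{n-1})$ is a regular sequence in $E_*$, the Koszul complex gives a free graded resolution of length exactly $n$. A standard devissage along a prime filtration, combined with the fact that every prime of $E_*$ contains some $I_k$ (with $k\le n$) and that $E_*/I_k$ admits a Koszul resolution of length $k$, then bounds the projective dimension of any finitely generated graded module by $n$. For arbitrary modules one passes to filtered colimits; since $E_*$ is Noetherian, $\Ext^{n+1}$ commutes with filtered colimits in the first variable, so the bound $\operatorname{gl.dim}E_*\le n$ follows.

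For the lower bound one computes $\Ext^n_{E_*}(E_*/I_n,E_*)$ directly from the Koszul resolution: the top differential is transpose to the bottom one, and its cokernel is, up to a degree shift by $\sum|v_i|$, isomorphic to $E_*/I_n$ itself, which is nonzero. Hence $\operatorname{gl.dim}E_*\ge n$, and combining with the previous paragraph gives equality. The only real subtlety — and hence the main obstacle — is making sure the ``generalized'' structure of $E(n)$ (Definition above) does not disturb regularity of the coefficient ring or the fact that $(p,v_1,\ldots,v_{n-1})$ is an invariant regular sequence; both follow from the recipe of building $BP\langle n\rangle$ as $BP/(w_{n+1},w_{n+2},\ldots)$ and inverting a $v_n$-power, which manifestly preserves polynomiality in the lower $v_i$'s.
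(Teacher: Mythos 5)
Your overall skeleton agrees with the paper's: the Koszul complex on the regular sequence $p,v_1,\ldots,v_{n-1}$ resolves the graded residue field $k_*=E_*/I_n\cong{\mathbb F}_p[v_n^{\pm}]$ in length $n$, the nonvanishing of the top Koszul homology gives the lower bound, and the passage from finitely generated to arbitrary modules is handled by a graded Auslander-type theorem. The lower-bound computation of $\Ext^n_{E_*}(E_*/I_n,E_*)$ is fine.

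The genuine gap is in your upper-bound step. A prime filtration of a finitely generated graded module has subquotients of the form $E_*/\mathfrak{p}$ for \emph{arbitrary} graded primes $\mathfrak{p}$, and these are almost never among the invariant primes $I_k$: already for $n=2$ the ring $\ZZ_p[v_1,v_2^{\pm}]$ has graded primes such as $(v_1)$ and many others generated by homogeneous elements of mixed type, and your claim that ``every prime contains some $I_k$'' is only true for the vacuous case $k=0$. For a general graded prime, $E_*/\mathfrak{p}$ is not a complete intersection, so no Koszul resolution is available, and bounding its projective dimension by $n$ is essentially equivalent to the regularity statement you are trying to prove --- the devissage is circular. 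The paper sidesteps this entirely by using \emph{minimal} graded free resolutions: for a minimal resolution $F_\bullet$ of a finitely generated $M_*$ all differentials in $k_*\otimes F_\bullet$ vanish, so $\Tor^{E_*}_i(k_*,M_*)\cong k_*\otimes_{E_*}F_{i}$, and the vanishing of $\Tor_{>n}(k_*,-)$ supplied by the Koszul resolution of $k_*$ forces $F_{i}=0$ for $i>n$. You should replace the devissage with this minimal-resolution argument (equivalently, the graded local criterion $\operatorname{pd}M=\sup\{i:\Tor_i(k_*,M)\neq0\}$). A secondary soft spot: $\Ext^{n+1}(-,N)$ does not simply commute with filtered colimits in the first variable (there are $\lim^1$ issues), so the reduction to finitely generated modules should be done by citing the graded version of Auslander's theorem, as the paper does, rather than by a colimit argument.
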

\begin{proof}
The corresponding result is well known in the ungraded setting. Its graded version is harder to find in the literature  but the proof given in \cite[19.5]{MR1322960}  carries over:  Let $k_*$ be the graded field ${\mathbb F}_p[v_n^{\pm}]$. Since $(p,v_1,\ldots ,v_{n-1})$ is a regular sequence the Koszul complex provides a free graded resolution of length $n$. This implies the vanishing of $\Tor^{E_*}_{i+1}(k_*,M_*)$ for all $i\geq n$ and for all $M_*$. Next let $F=(F_{n *},\varphi_n)$  be a graded minimal free resolution of of a finitely generated module $M_*$. (The minimality condition means that for each $n$ a basis of $F_n$ maps to a minimal set of generators of $\ker   \varphi_{n-1}$.) Since all differentials in $k_*\otimes F$ are 0 we have 
$$ \Tor^{E_*}_{i+1}(k_*,M_*)\cong k_*\otimes_{E_*}F_{i+1,*}.$$
This vanishes iff $F_{i+1,*}$ vanishes because the resolution is free. Thus we have shown the claim for finitely generated modules. The general  result follows from the graded version of Auslander's Theorem \cite[19.1]{MR1322960}.
\end{proof}

\begin{lemma}\label{target}
Let $E$ be $E(2)$  and let $E/p^{\infty}$ be the cofibre of $E\ra p^{-1}E$. Suppose $\Ext^{0,t+1}_{E_*}(E_*X,E_*/p^\infty)$ vanishes for all even $t$.
Then for even $t$ there is an isomorphism
$$ \Ext^{2,t}_{E_*}(E_*X,E_*)\cong (E/p^\infty)^{t+1} X .$$

\end{lemma}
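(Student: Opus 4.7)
The plan is to use the cofibre sequence $E\to p^{-1}E\to E/p^\infty$ together with the universal coefficient spectral sequence for $E$. The induced long exact sequence of cohomology reads
$$E^{t+1}X\to (p^{-1}E)^{t+1}X\to (E/p^\infty)^{t+1}X\to E^{t+2}X\to (p^{-1}E)^{t+2}X,$$
so $(E/p^\infty)^{t+1}X$ sits as an extension of the cokernel on the left by the kernel on the right. The strategy is to arrange that the cokernel vanishes and that the kernel equals $\Ext^{2,t}_{E_*}(E_*X,E_*)$.

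For the cokernel I would argue that the odd-degree group $(p^{-1}E)^{t+1}X$ vanishes, using that $p^{-1}E$ is rational and that the hypothesis forces a parity condition on $E_*X$ through the long exact $\Ext$-sequence associated to the coefficient sequence $0\to E_*\to p^{-1}E_*\to E_*/p^\infty\to 0$. For the kernel, the UCSS filtration on $E^{t+2}X$ has by Lemma~\ref{gl-dim} only three nonzero pieces $\Ext^{s,\,t+2-s}(E_*X,E_*)$ for $s=0,1,2$: the bottom $\Ext^{0,t+2}(E_*X,E_*)$ is $p$-torsion-free since $E_*$ is; the middle $\Ext^{1,t+1}(E_*X,E_*)$ lives in odd internal degree and vanishes by combining the hypothesis with the segment $\Hom^{t+1}(E_*X,E_*/p^\infty)\to \Ext^{1,t+1}(E_*X,E_*)\to \Ext^{1,t+1}(E_*X,p^{-1}E_*)$ of the long exact $\Ext$-sequence; and the top piece $\Ext^{2,t}(E_*X,E_*)$ is itself $p$-power torsion by flat base change, since $\Ext^2_{p^{-1}E_*}(p^{-1}E_*X,p^{-1}E_*)=0$ in the relevant even degree. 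Putting these facts together, the $p$-power torsion in $E^{t+2}X$ coincides with $\Ext^{2,t}(E_*X,E_*)$, which gives the asserted isomorphism.

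The hard step will be justifying the parity vanishings, both at the level of Ext and at the level of the rational cohomology $(p^{-1}E)^*X$. The hypothesis is a parity statement about $\Hom$ into $E_*/p^\infty$; one must propagate it through the long exact sequences and flat base change to obtain the needed vanishings of $(p^{-1}E)^{\mathrm{odd}}X$, $\Ext^{1,\mathrm{odd}}(E_*X,E_*)$, and $\Ext^2_{p^{-1}E_*}(p^{-1}E_*X,p^{-1}E_*)$ in the degrees that matter. Extra care is required because $E_*X$ need not be finitely generated over $E_*$, so the interaction between Ext and the colimit defining $p^{-1}E_*$ is cleanest handled at the spectral sequence level rather than directly on modules.
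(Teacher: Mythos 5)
Your route is genuinely different from the paper's. The paper runs the universal coefficient spectral sequence for the theory $E/p^\infty$ itself: after observing that $p^{-1}E_*X$ is a \emph{free} $p^{-1}E_*$-module (because the Hurewicz map is an isomorphism for any rational ring theory), it gets $\Ext^s_{E_*}(E_*X,p^{-1}E_*)=0$ for $s>0$, hence $\Ext^{s}_{E_*}(E_*X,E_*/p^\infty)\cong\Ext^{s+1}_{E_*}(E_*X,E_*)$ for $s\geq 1$; combined with Lemma \ref{gl-dim} and the hypothesis, the only surviving entry in total degree $t+1$ is $\Ext^{1,t}(E_*X,E_*/p^\infty)\cong\Ext^{2,t}(E_*X,E_*)$, and with a single nonzero column there are no differentials or extension problems. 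Your plan instead runs the topological long exact sequence of $E\to p^{-1}E\to E/p^\infty$ on $X$ together with the UCSS for $E$, which forces you to control the full three-step filtration of $E^{t+2}X$, its edge maps, and possible $d_2$-differentials. This can be made to work, but it reproduces the same inputs with strictly more bookkeeping.

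As written there are two genuine gaps. First, the linchpin you never state is the freeness of $p^{-1}E_*X$ over $p^{-1}E_*$. This is what simultaneously kills $\Ext^{>0}_{E_*}(E_*X,p^{-1}E_*)$, resolves your worry about Ext versus the colimit defining $p^{-1}E_*$, and lets the hypothesis propagate: a nonzero degree-$(t+1)$ map $E_*X\to p^{-1}E_*$ would, after multiplying by $p^{-N}$, give a nonzero composite into $E_*/p^\infty$, so the hypothesis forces the odd rational homology, and hence $(p^{-1}E)^{t+1}X$, to vanish. Without this input none of your parity vanishings get off the ground. Second, your last step is incorrect as stated: ``$\Ext^{2,t}(E_*X,E_*)$ is $p$-power torsion by flat base change'' fails because $E_*X$ is not finitely generated, so $\Ext^2_{p^{-1}E_*}(p^{-1}E_*X,p^{-1}E_*)$ is not the localization of $\Ext^2_{E_*}(E_*X,E_*)$; moreover the kernel of $E^{t+2}X\to (p^{-1}E)^{t+2}X$ need not coincide with the $p$-power torsion of $E^{t+2}X$, since $(p^{-1}E)^*X$ involves an inverse limit over finite subcomplexes and is not $p^{-1}(E^*X)$. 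The repair is to argue with the filtration directly: show $\Ext^{0,t+1}_{E_*}(E_*X,E_*)=0$ (so no $d_2$ hits $E_2^{2,t}$), that $E_\infty^{1,t+1}=0$, and that the edge map $E_\infty^{0,t+2}\to\Hom_{p^{-1}E_*}(p^{-1}E_*X,p^{-1}E_*)$ is injective; the kernel is then exactly the bottom filtration piece $\Ext^{2,t}(E_*X,E_*)$. At that point you have rederived, with extra steps, what the paper's one-column argument gives immediately.
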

\begin{proof} 

Suppose $F$ is a ring theory with rational coefficients. Then Serre's result and the comparison theorem for homology theories imply that  the $F_*$-linear extension of the $F$-Hurewicz map
$$ \xymatrix{F_* \otimes \pi_*^{st}(X) \ar[r]& F_*(X)}$$
is an isomorphism.  It follows  that   $p^{-1}E_*X$ is a free $p^{-1}E_*$ module. Hence
 $$  \Ext^{s}_{E_*}(E_*X,p^{-1}E_*)\cong  \Ext^{s}_{p^{-1}E_*}(p^{-1}E_*X,p^{-1}E_*)$$
vanishes for all $s>0$, and
the short exact sequence
$$\xymatrix{E_*\,\,  \ar@{>->}[r]& p^{-1}E_* \ar@{->>}[r]&E_*/p^{\infty}}$$ induces isomorphisms
$$  \Ext^{s+1}_{E_*}(E_*X,E_*)\cong  \Ext^{s}_{E_*}(E_*X,E_*/p^{\infty}).$$
By \ref{gl-dim} these groups vanish for $s\geq 2$ and so does $\Ext_{E_*}^{0,t+1}(E_*X,E_*/p^{\infty})$  for even $t$. 
Hence the claim follows from the universal coefficient spectral sequence
$$\Ext_{E_*}^{s,t}(E_*X, E_*/p^\infty)\Longrightarrow  (E/p^\infty)^{s+t} X.$$
\end{proof}

We now specify to the case $X=B{\mathit Spin}$ and  prove a splitting principle using the following computation.
\begin{thm}\cite[1.2]{MR1909866}
Let $n$ be 1 or 2. Let  $ b_i \in K(n)_{2i}BS^1$ be the dual to the power ${c_1^i}$ of the first Chern class of the canonical line bundle. Denote its image  under the map induced by the inclusion of the maximal torus
$$\xymatrix{BS^1\ar[r]& B{\mathit Spin}(3)\ar[r]&B{\mathit Spin}}$$
by the same name. 
Then we have
$$
K(n)_*B{\mathit Spin} \cong \pi_*K(n)[b_{2^n \cdot 2}, b_{2^n \cdot 4},b_{2^n \cdot 6},\ldots].
$$
\end{thm}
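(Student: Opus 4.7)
The strategy I would take is to run the Atiyah-Hirzebruch spectral sequence $H^*(B{\mathit Spin}; \mathbb{F}_2) \otimes \pi_* K(n) \Rightarrow K(n)^* B{\mathit Spin}$ and then dualize to recover the stated homology description. Since the expected answer is concentrated in even degrees and of finite type in each degree, the dualization is well behaved and no $\lim^1$ issues arise.

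The $E_2$-term is Quillen's polynomial description of $H^*(B{\mathit Spin}; \mathbb{F}_2)$: a polynomial algebra on a specific family of Stiefel-Whitney classes $w_i$ (with indices $i$ outside the exceptional set $\{2^a+1\}$) together with further polynomial generators arising from the $K(\ZZ/2,1)$-fibration $B{\mathit Spin} \to BSO$. The first nontrivial differential in the AHSS for $K(n)$-theory is $d_{2^{n+1}-1} = v_n \cdot Q_n$, where $Q_n$ is the Milnor primitive of degree $2^{n+1}-1$. Using Wu's formula to evaluate $Q_n$ on Stiefel-Whitney classes, one checks that this differential (together with the higher differentials it forces) kills precisely the polynomial generators whose degree fails to be divisible by $2^{n+2}$, leaving a polynomial algebra on the survivors.

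To identify the surviving generators with the $b_{2^n \cdot 2k}$ from the statement, I would compare along the maximal-torus inclusion $BS^1 \hookrightarrow B{\mathit Spin}(3) \hookrightarrow B{\mathit Spin}$. On $BS^1$ the $K(n)$-cohomology is $\pi_* K(n)[\![x]\!]$, and the effect of passing from $BSO$ to $B{\mathit Spin}$ (killing $w_2$) interacts with the $[2]$-series of the $K(n)$-formal group law, whose leading term modulo $2$ is $v_n x^{2^n}$. This accounts precisely for the $2^n$-fold index shift between the $BSO$ generators and the $B{\mathit Spin}$ generators; dualizing and comparing with the explicit classes $b_i$ then pins down the identification.

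The hardest part is the combinatorial analysis of the $Q_n$-action on Quillen's generators, which is manageable for $n=1$ but substantially more delicate for $n=2$, where one must track both the action on the $BSO$-part and on the additional generators coming from the fibration. A secondary obstacle is ruling out hidden multiplicative extensions that might spoil the polynomial structure visible on $E_\infty$; this is most naturally controlled by the splitting-principle argument through the maximal torus just described, where the formal group law forces everything and no new relations can appear.
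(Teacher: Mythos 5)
First, note that this theorem is not proved in the paper at all: it is imported verbatim from Kitchloo--Laures \cite[1.2]{MR1909866}, so there is no internal argument to compare yours against. The cited proof does not run the Atiyah--Hirzebruch spectral sequence over $H^*(B{\mathit Spin};\mathbb{F}_2)$; it works with the fibration $B{\mathit Spin}\to B{\mathit SO}\to K(\ZZ/2,2)$, the Ravenel--Wilson computation of $K(n)_*K(\ZZ/2,q)$ (which is where the restriction $n\le 2$ enters), the known answer for $K(n)_*B{\mathit SO}$, and a transfer/real-structure argument producing the classes $b_i$ from the torus. Your AHSS strategy is a genuinely different route, and for $n=1$ it is essentially the classical computation, but as written it has a real gap at its center.

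The gap is the sentence claiming that $d_{2^{n+1}-1}=v_nQ_n$ ``together with the higher differentials it forces'' kills exactly the generators in degrees not divisible by $2^{n+2}$. That statement \emph{is} the theorem, and nothing in the proposal establishes it. For $n=2$ the $Q_2$-homology of $\mathbb{F}_2[w_i: i\ne 2^j+1]$ is not a polynomial algebra concentrated in degrees divisible by $16$, so the $E_8$-page is strictly larger than the answer and genuinely nontrivial higher differentials must occur; they are not formal consequences of $d_7$, and identifying them is precisely the hard combinatorial/structural problem that the fibration method was invented to avoid (already for $B{\mathit SO}$ Kono--Yagita needed more than the $Q_n$-differential). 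Your fallback for the multiplicative extensions is moreover circular: injectivity of the restriction $K(2)^*B{\mathit Spin}\to K(2)^*BT^\infty$ is deduced \emph{from} this theorem in the present paper (via surjectivity of $K(2)_*BT^\infty\to K(2)_*B{\mathit Spin}$), so the splitting principle is not available as an independent input to control $E_\infty$. A smaller inaccuracy: for the infinite group, $H^*(B{\mathit Spin};\mathbb{F}_2)\cong\mathbb{F}_2[w_i: i\ne 2^j+1]$ with no extra polynomial generators coming from the $K(\ZZ/2,1)$-fibration; the additional generator in Quillen's description occurs only for the finite groups ${\mathit Spin}(m)$. To make your approach work you would need an actual computation of the $Q_n$-homology together with an independent source of permanent cycles (e.g.\ the torus classes $b_{2^n\cdot 2k}$, shown to survive by naturality from $BS^1$) to force the higher differentials by a counting argument --- at which point you would have reconstructed much of the Kitchloo--Laures argument.
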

\begin{cor}
Let $T(\lfloor m/2\rfloor )$ be the maximal torus of $Spin(m)$ and set $ BT^\infty=\mbox{co}\!\lim BT(\lfloor m/2\rfloor )$. Then  the restriction map from $K(n)^*B{\mathit Spin}$ to $K(n)^*BT^{\infty}$ is injective for $n\leq2$.
\end{cor}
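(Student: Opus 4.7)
The plan is to pass to the dual. Since $K(n)_*B{\mathit Spin}$ is by the theorem a polynomial algebra on generators of positive even degree, it is of finite type over $\pi_*K(n)$ and concentrated in even degrees, so $K(n)^*B{\mathit Spin} \cong \Hom_{K(n)_*}(K(n)_*B{\mathit Spin}, K(n)_*)$ without $\lim^1$-interference, and the same holds for $BT^\infty$. Hence injectivity of the restriction $K(n)^*B{\mathit Spin} \to K(n)^*BT^\infty$ is equivalent to surjectivity of the induced homology map $K(n)_*BT^\infty \to K(n)_*B{\mathit Spin}$.

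First I would observe that every polynomial generator $b_{2^n \cdot 2k}$ lies in the image: by its very definition it is the pushforward of a Chern class dual in $K(n)_*BS^1$ along the maximal-torus composite $BS^1 \to B{\mathit Spin}(3) \to B{\mathit Spin}$, and this composite factors through $BT^\infty$.

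Next I would show that the image of $K(n)_*BT^\infty$ is a subalgebra of $K(n)_*B{\mathit Spin}$ under the Pontryagin product coming from the Whitney-sum $H$-space structure on $B{\mathit Spin}$. Two classes lifted via $BT(k_i) \to B{\mathit Spin}(m_i) \to B{\mathit Spin}$ have product corresponding to the composite
$$ BT(k_1)\times BT(k_2)\;\cong\; BT(k_1+k_2)\;\lra\; B{\mathit Spin}(m_1+m_2)\;\lra\; B{\mathit Spin},$$
which again factors through $BT^\infty$. Hence the image is a subalgebra, and since it contains the polynomial generators $b_{2^n \cdot 2k}$ it is all of $K(n)_*B{\mathit Spin}$.

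The main obstacle will be the bookkeeping around the duality step together with a careful check that the inclusions of maximal tori interact with the Whitney-sum $H$-space structure as claimed (in particular, that $BT(k_1)\times BT(k_2)\to B{\mathit Spin}$ agrees, up to the canonical identification $BT(k_1)\times BT(k_2)\cong BT(k_1+k_2)$, with the single maximal-torus map into $B{\mathit Spin}(m_1+m_2)$). Once this compatibility is cleanly articulated, surjectivity in $K(n)$-homology transfers immediately back to injectivity in $K(n)$-cohomology for $n\leq 2$.
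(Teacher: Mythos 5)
Your proposal is correct and is essentially the paper's own argument: the paper likewise reduces to surjectivity of the dual map $K(n)_*BT^{\infty}\to K(n)_*B{\mathit Spin}$ (duality being automatic since $K(n)_*$ is a graded field) and observes that each monomial $b_{i_1}b_{i_2}\cdots b_{i_k}$ comes from the classifying space of a $k$-dimensional torus, which is exactly your generators-plus-subalgebra argument via the Whitney-sum $H$-space structure spelled out in more detail.
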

\begin{proof}
It suffices to show that the dual map
$K(n)_*BT^{\infty}\lra K(n)_*B{\mathit Spin} $ is surjective. This is immediate from the theorem since each monomial  $b_{i_1}b_{i_2}\cdots b_{i_k}$ comes from the classifying space of the  $k$-dimensional torus.
\end{proof}
\begin{cor}\label{torus}
For $E=E(k,n)$ with $n\leq 2$ the restriction map from $E^*B{\mathit Spin} $ to $E^*BT^{\infty}$ is injective.
\end{cor}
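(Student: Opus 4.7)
The plan is to argue by downward induction on $k$, from the base case $k=n$ (which is precisely the previous corollary since $E(n,n)=K(n)$) all the way to $k=0$, using the cofibre sequences (\ref{csekn}) to pass from $E(k,n)$ down to $E(k-1,n)$.

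First I would verify that both $B{\mathit Spin}$ and $BT^\infty$ have even Morava $K(n)$-cohomology in the range $n\le 2$. For $BT^\infty$ this is immediate since it is a product of copies of $\mathbb{C}P^\infty$, while for $B{\mathit Spin}$ it follows from the Kitchloo--Laures computation quoted above. Applying Corollary \ref{even} then gives that $E(k,n)^*X$ is concentrated in even degrees for every $k$ and for both $X\in\{B{\mathit Spin},BT^\infty\}$. In particular the long exact sequence (\ref{exseq}) reduces to a short exact sequence
$$ 0\to E(k-1,n)^{*}X\xrightarrow{v_{k-1}}E(k-1,n)^{*}X\to E(k,n)^{*}X\to 0 $$
for both spaces.

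For the induction step I would chase in the morphism of these short exact sequences induced by the inclusion of the maximal torus. Given $x\in E(k-1,n)^{*}B{\mathit Spin}$ in the kernel of the restriction, naturality shows that its image in $E(k,n)^{*}B{\mathit Spin}$ restricts to zero in $E(k,n)^{*}BT^\infty$, hence is zero by the inductive hypothesis. So $x=v_{k-1}y$ for some $y$; and since multiplication by $v_{k-1}$ acts injectively on $E(k-1,n)^{*}BT^\infty$, the element $y$ again lies in the kernel of the restriction map. Iterating this produces an infinite $v_{k-1}$-divisibility of $x$, and Theorem \ref{RWY} then forces $x=0$.

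The step that I expect to require the most care is the very last one, the passage from $E(1,n)$ down to $E(0,n)=E(n)$, where Theorem \ref{RWY} must be applied with divisor $v_0=p$, lying just outside its stated range $0<k\le n$. This is where the $p$-completeness convention adopted at the start of the section should earn its keep: the reduction to finite subcomplexes carried out in the proof of Theorem \ref{RWY} lands in finitely generated $p$-complete $E(n)^{*}$-modules, which cannot contain any nonzero infinitely $p$-divisible element. With this addendum the same kernel chase closes the induction at $k=0$ and establishes injectivity of the restriction map uniformly for all $k$ with $0\le k\le n\le 2$.
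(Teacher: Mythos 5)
Your proof is correct and follows essentially the same route as the paper: a descending induction on $k$ starting from the Kitchloo--Laures base case $K(n)=E(n,n)$, a diagram chase in the map of short exact sequences supplied by Corollary \ref{even} to produce infinite $v_{k-1}$-divisibility of any element killed by restriction, and an appeal to Theorem \ref{RWY}. Your extra remark about the $v_0=p$ case at the final step $k=1\to k=0$ is a point the paper passes over silently, and your resolution via $p$-completeness is the right one.
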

\begin{proof}
By descending induction on $k\leq n$, by the previous corollary and by Corollary \ref{even} we have a map of short exact sequences
$$ \xymatrix{E(k-1,n)^*B{\mathit Spin}\, \, \ar@{>->}[r]^{v_{k-1}}\ar[d]&
E(k-1,n)^*B{\mathit Spin}  \ar@{->>}[r]\ar[d]&E(k,n)^*B{\mathit Spin}\ar@{>->}[d]\\
E(k-1,n)^*BT^\infty\, \, \ar@{>->}[r]^{v_{k-1}}&E(k-1,n)^*BT^\infty\ar@{->>}[r]&E(k,n)^*BT^\infty}
$$
for which the last vertical map can be assumed to be injective.
Hence, any element in $E(k-1,n)^*B{\mathit Spin}$ which restricts trivially to the torus is divisible by $v_{k-1}$ and the quotient  again restricts trivially to the torus. Continuing this way, we see that it must be infinitely divisible by  $v_{k-1}$ and thus has to vanish by Theorem \ref{RWY}.
\end{proof}
\begin{thm}\label{UCTE(k)} 
Let $E$ be $E=E(1)$ or the 2-completed $TMF_1(3) $. Then  the universal coefficient isomorphism
$$ E^*B{\mathit Spin} \cong \Hom_{E_*} (E_*B{\mathit Spin}, E_*)$$ holds.
\end{thm}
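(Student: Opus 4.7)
The plan is to show that the universal coefficient spectral sequence
\[
\Ext^{s,t}_{E_*}(E_*B{\mathit Spin},E_*)\Longrightarrow E^{s+t}B{\mathit Spin}
\]
collapses onto its $s=0$ column. By Lemma~\ref{gl-dim} the global dimension of $E_*$ equals $n$, so the only potentially nonzero columns are $s=0,1$ for $E=E(1)$ and $s=0,1,2$ for the $2$-completed $TMF_1(3)$; the task reduces to showing $\Ext^s_{E_*}(E_*B{\mathit Spin},E_*)=0$ for $s\geq 1$.

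First I assemble the structural input. Corollary~\ref{even} at $k=0$ gives that $E^*B{\mathit Spin}$ is concentrated in even degrees, and Corollary~\ref{torus} at $k=0$ embeds it into the formal power series ring $E^*BT^\infty$, hence forces $E^*B{\mathit Spin}$ to be $p$-torsion free. Dualising the Bockstein/infinite-divisibility argument of Corollary~\ref{even} and using that by Kitchloo--Laures $K(n)_*B{\mathit Spin}$ is polynomial (hence even and free), I descend down the tower (\ref{csekn}) in homology to conclude that $E_*B{\mathit Spin}$ is also concentrated in even degrees. Since $E_*$, $p^{-1}E_*$ and $E_*/p^\infty$ are all even, every graded $\Hom_{E_*}(E_*B{\mathit Spin},-)$ into these modules vanishes in odd degrees; in particular the hypothesis of Lemma~\ref{target} is automatic.

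Next I handle $\Ext^2$, relevant for the $TMF_1(3)$-case. Lemma~\ref{target} yields $\Ext^{2,t}_{E_*}(E_*B{\mathit Spin},E_*)\cong (E/p^\infty)^{t+1}B{\mathit Spin}$ for $t$ even. The cofibre sequence $E\to p^{-1}E\to E/p^\infty$ combined with the evenness of $E^*B{\mathit Spin}$ and of the rational cohomology $p^{-1}E^*B{\mathit Spin}\cong p^{-1}E^*[\![p_1,p_2,\ldots]\!]$ identifies $(E/p^\infty)^{\text{odd}}B{\mathit Spin}$ with the $p$-torsion subgroup of $E^{\text{even}}B{\mathit Spin}$, which vanishes by the torus embedding; hence $\Ext^{2,\text{even}}=0$. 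The remaining $\Ext^{2,\text{odd}}$ is handled by the Ext long exact sequence attached to $0\to E_*\to p^{-1}E_*\to E_*/p^\infty\to 0$: Serre's freeness gives $\Ext^{\geq 1}_{E_*}(E_*B{\mathit Spin},p^{-1}E_*)=0$ and reduces $\Ext^{2,\text{odd}}(-,E_*)$ to $\Ext^{1,\text{odd}}(-,E_*/p^\infty)$, whose vanishing in turn follows from the UCSS for $E/p^\infty$ and the explicit description of $(E/p^\infty)^{\text{even}}B{\mathit Spin}$ as $p^{-1}E^{\text{even}}B{\mathit Spin}/E^{\text{even}}B{\mathit Spin}$.

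The vanishing of $\Ext^1$ completes the argument. Applying $\Hom_{E_*}(E_*B{\mathit Spin},-)$ to the same short exact sequence, the Serre vanishing presents $\Ext^1_{E_*}(E_*B{\mathit Spin},E_*)$ as the cokernel of $\Hom(E_*B{\mathit Spin},p^{-1}E_*)\to\Hom(E_*B{\mathit Spin},E_*/p^\infty)$. In odd degrees both Hom groups are zero by evenness, so $\Ext^{1,\text{odd}}=0$; in even degrees the vanishing is forced by UCSS convergence, since $\Ext^{1,\text{even}}$ lies in odd total degree and the global-dimension bound rules out both incoming and outgoing $d_r$ at that bidegree. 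Putting everything together collapses the UCSS onto its $\Hom$-column. The main delicate point is keeping the algebraic long exact sequence in the $\Ext$-argument and the topological one in the cohomology computation compatibly synchronized through all the parity cases.
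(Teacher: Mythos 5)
Your overall architecture (collapse the universal coefficient spectral sequence using the global dimension bound of Lemma~\ref{gl-dim}, Lemma~\ref{target} for the $\Ext^2$-line, and the torus embedding for torsion-freeness) matches the paper's, but there is a genuine gap at the step you lean on most heavily: the claim that $E_*B{\mathit Spin}$ is concentrated in even degrees. The ``dualised'' divisibility argument does not go through in homology. Theorem~\ref{RWY} (infinitely $v_k$-divisible implies zero) is proved by restricting to finite subcomplexes, where one has a Landweber filtration of a finitely generated module; for the \emph{homology} of an infinite complex such as $B{\mathit Spin}$ one has $E_*X=\colim_\alpha E_*X_\alpha$, and an element can be infinitely divisible in the colimit without being divisible in any single finite stage, so infinite divisibility no longer forces vanishing. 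The paper flags exactly this: the remark following Theorem~\ref{UCTE(k)} states that it is not known whether the generalized $E(2)$-homology of $B{\mathit Spin}$ is even, and that this is precisely what makes the universal coefficient isomorphism difficult. So the fact you call ``automatic'' is the open input you are not entitled to.

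This gap propagates to the two places where the real work happens. First, the hypothesis of Lemma~\ref{target} --- the vanishing of $\Ext^{0,t+1}_{E_*}(E_*X,E_*/p^\infty)$ for even $t$ --- is not automatic; the paper establishes it by embedding $E_*/p^\infty$ into ${K_{Tate}}_*/p^\infty$ via the $q$-expansion principle (Lemma~\ref{ff}) and using Landweber exactness to reduce to an $\Ext^1$-computation over $E(1)_*$, which vanishes by the already-established $E(1)$ case. Second, your vanishing of $\Ext^{1,\mathrm{odd}}_{E_*}(E_*X,E_*)$ (the part contributing to even total degree, where $E^*X$ is nonzero and the convergence argument you use for $\Ext^{1,\mathrm{even}}$ gives nothing) again rests on evenness of $E_*X$; the paper instead deduces it from the injectivity of the edge homomorphism $E^*X\to\Ext^{0,*}_{E_*}(E_*X,E_*)$, which it gets by factoring through the restriction to $E^*BT^\infty\cong\Hom_{E_*}(E_*BT^\infty,E_*)$ and invoking Corollary~\ref{torus}. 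Your treatment of $\Ext^{2,\mathrm{odd}}$ via ``the UCSS for $E/p^\infty$'' is also only a sketch (one would need to control the edge map there as well), though that piece is less essential since odd total degrees are handled by evenness of $E^*X$ and of the $\Hom$-target. To repair the proof you should delete the homological evenness claim and substitute the two arguments above; the pieces of your write-up concerning $\Ext^{2,\mathrm{even}}$ via $(E/p^\infty)^{\mathrm{odd}}X=0$ and the torsion-freeness from the torus embedding are correct and agree with the paper.
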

\begin{proof}
First note that we have the isomorphism
$$ E^*BT^\infty\cong \Hom_{E_*} (E_*BT^\infty,E_*)$$
because $E_*BT^\infty\cong \colim E_*T^k$ is free: a basis is given by arbitrary products of the form
$$\beta_{i_1}\otimes \beta_{i_2}\otimes \cdots \otimes \beta_{i_k}$$ 
where $\beta_i$ is dual to $c_1^i$.\par
For the space $X=B{\mathit Spin}$ and for  $E=E(1)$ the universal coefficient spectral sequence degenerates to the short exact sequence
$$\xymatrix{\Ext^{1,*}_{E_*}(E_{*-1}X,E_*)\,\,  \ar@{>->}[r]& E^*X\ar@{->>}[r]&\Ext^{0,*}_{E_*}(E_{*}X,E_*)}$$
by Lemma \ref{gl-dim}.
Furthermore, the second map is injective because it factors through the restriction map into  $ E^*BT^\infty\cong \Ext^{0,*}_{E_*} (E_*BT^\infty,E_*)$ and this map is injective by Corollary  \ref{torus}. \par

For the case $E=TMF_1(3)$ at $p=2$, we already know from Corollary \ref{even} that $E^*X$ is concentrated in even degrees. Since $E_*$ is torsion free and $p^{-1} E_*X$ is in even degrees, the group $\Hom_{E_*}(E_*X,E_*)$ must be even, too. Hence, we can restrict our attention to even degrees. 
\par
We would like to apply Lemma \ref{target}, and therefore we have to show that $\Ext^{0,t+1}_{E_*}(E_*X,E_*/p^\infty)$ vanishes for all even $t$. By Lemma \ref{ff} or simply by the $q$-expansion principle, the group $E_*/p^\infty$ injects into ${K_{Tate}}_*/p^\infty$ and so does the induced map on $\Ext^0$-groups. The isomorphisms 
 at $p=2$
\begin{eqnarray*}
\Ext^{0,t+1}_{E_*}(E_*X,{K_{Tate}}_*/p^\infty)&\cong& \Ext^{0,t+1}_{BP_*}(BP_*X,{K_{Tate}}_*/p^\infty)\\ &\cong &\Ext^{0,t+1}_{E(1)_*}(E(1)_*X,{K_{Tate}}_*/p^\infty)
\end{eqnarray*}
follow from the Landweber exactness of $E$ and $K_{Tate}$. The latter group   coincides with  
$\Ext^{1,t+1}_{E(1)_*}(E(1)_*X,{K_{Tate}}_*)$ which is 
 a product of groups of the form $\Ext^{1,t+1}_{E(1)_*}(E(1)_*X,E(1)_*)$.  Thus it is trivial.
\par
The even dimensional $\Ext^2$-term of the  universal coefficient spectral sequence has been  identified with the odd part of  $({E/p^\infty})^* X$ in Lemma \ref{target}. Hence, for its vanishing it is enough to show the injectivity of the first map in the exact sequence
$$  \xymatrix{E^*X \ar[r]& (p^{-1}E)^*X \ar[r]& (E/p^\infty)^*X}.$$
This follows from the observation that its composite with the restriction map  to $(p^{-1}E)^* BT^\infty$ is injective by Corollary \ref{torus}. In even degrees we obtain the  short exact sequence 
$$\xymatrix{\Ext_{E_*}^{1,*}(E_{*-1}X,E_*)\, \, \ar@{>->}[r]& E^*X\ar@{->>}[r]&\Ext_{E_*}^{0,*}(E_{*}X,E_*)}.$$
Once more  by Corollary \ref{torus} the kernel of the second map vanishes.
\end{proof}
\begin{remark}
We do not know much about the generalized $E(2)$-homology of $B{\mathit Spin}$, not even if it is concentrated in even degrees. This is what makes the proof of the universal coefficient isomorphism difficult.
\end{remark}

 \begin{proof}[Proof of Theorem \ref{main}:] 
 We first show the pullback property for the  $2$-completed theories. Recall from \cite[Lemma 1.5]{MR1660325}  that there is a pullback of rings
 $$ \xymatrix{ TMF_1(3)_*\ar[r]\ar[d]& K[1/3]_*((q))\ar[d]\\
 (TMF_1(3)_*)_\QQ\ar[r]& (K_*)_\QQ((q))}
 $$
 and each corner is the coefficient ring of a Landweber exact theory. For each such theory $E$, and for all spaces $X$, we have the natural isomorphism
  $$\Hom_{BP_*}(BP_*X, E_*)\cong  \Hom_{E_*}(E_*X, E_*).$$
 
 Hence, when applying the left exact functor $\Hom_{BP_*}(BP_*B{\mathit Spin}, -)$ to the diagram we still have a pullback. By  Theorem \ref{UCTE(k)} each  corner satisfies the universal coefficient isomorphism and hence we get the desired pullback diagram.\par
 It remains to show the integral result. For odd primes we still have the appropriate pullback diagram by \cite[Theorem 1.12]{MR1660325}. Moreover, for all spectra $Y$ we have an arithmetic pullback by \cite[Proposition 2.9]{MR551009}
 \begin{equation} \label{as}
\begin{aligned}
 \xymatrix{ Y \ar[r]\ar[d]& \prod_{p}L_{S\ZZ /p}Y\ar[d]\\
 L_{S\QQ}Y\ar[r]&  L_{S\QQ}(\prod_{p}L_{S\ZZ /p}Y )}
 \end{aligned}
 \end{equation}
 where $SG$ denotes the Moore spectrum.
Mapping $X=B{\mathit Spin} $ to the diagram for $Y=TMF_1(3)$ yields a pullback of cohomology groups by the previous results. In this pullback diagram the lower right corner has the form
$$H^*(X, \left(\prod_p {TMF_1(3)}_p^\wedge \right)^*_\QQ)\cong \left(\prod_p TMF_1(3)^\wedge_p\right)^*_\QQ X,$$
which we can replace with
$$ H^*(X, \prod_p \left({TMF_1(3)}_p^\wedge\right)^*_\QQ)\cong \prod_p\left(TMF_1(3)^\wedge_p\right)^*_\QQ X.$$
Then the enlarged  diagram 
 $$ \xymatrix{ TMF_1(3)^*X  \ar[r]\ar[d]&\prod_p\left(TMF_1(3)^\wedge_p\right)^* X\ar[r]\ar[d]& \prod_{p}\hat{K}_p^*X((q))\ar[d]\\
 TMF_1(3)^*_\QQ X\ar[r]& \prod_p\left(TMF_1(3)^\wedge_p\right)^*_\QQ X\ar[r]&\prod_{p}(\hat{K}_p^*)_\QQ X ((q)) }$$
 is a pullback.
Since the upper composite factors through $K_{Tate}^*X $ the result follows.
 \end{proof}

 \begin{cor}\label{UCT global}
The integral universal coefficient isomorphism
\begin{eqnarray*}
TMF_1(3)^*B{\mathit Spin} &\cong & \Hom_{TMF_1(3)_*}(TMF_1(3)_*B{\mathit Spin}, TMF_1(3)_*)
\end{eqnarray*}
holds.
\end{cor}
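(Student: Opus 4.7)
The plan is to deduce the corollary from Theorem \ref{main} by reinterpreting its pullback square on the $\Hom$-side and then comparing. Specializing Theorem \ref{main} to a point yields a pullback of $BP_*$-modules
$$
\xymatrix{
TMF_1(3)_* \ar[r] \ar[d] & {K_{Tate}}_* \ar[d] \\
(TMF_1(3)_*)_\QQ \ar[r] & ({K_{Tate}}_*)_\QQ,
}
$$
all four corners being Landweber exact over $BP_*$. Applying the left exact functor $F = \Hom_{BP_*}(BP_*B{\mathit Spin}, -)$ to this pullback produces a pullback of abelian groups, and the Landweber adjunction $\Hom_{BP_*}(BP_*X, E_*) \cong \Hom_{E_*}(E_*X, E_*)$ rewrites its four corners as $\Hom_{E_*}(E_*B{\mathit Spin}, E_*)$ for the four theories $E$ in question. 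Call this new square $(\mathrm{S}')$.

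Next I identify three corners of $(\mathrm{S}')$ with the corresponding three corners of the Theorem \ref{main} square. The two rational corners match on the nose, since the universal coefficient spectral sequence degenerates over a $\QQ$-algebra. The $K_{Tate}$-corner requires the universal coefficient isomorphism $K_{Tate}^*B{\mathit Spin} \cong \Hom_{{K_{Tate}}_*}({K_{Tate}}_*B{\mathit Spin}, {K_{Tate}}_*)$, which reduces by Landweber-exact base change to the corresponding statement for complex $K$-theory. That in turn follows from the well-known freeness of $K_*B{\mathit Spin}$ over $K_*$, which one can extract from the even concentration of $K(1)_*B{\mathit Spin}$ in the Kitchloo-Laures theorem together with the $K \to K(1)$ Bockstein, or alternatively from Atiyah's representation-theoretic description of $K^*B{\mathit Spin}$.

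Finally I compare: Theorem \ref{main} places $TMF_1(3)^*B{\mathit Spin}$ at the top-left of a pullback whose three other corners and whose structural maps (the Miller, Dold and Chern characters) agree with those of $(\mathrm{S}')$ after the identifications above. The universal property of pullbacks then forces the canonical edge morphism
$$TMF_1(3)^*B{\mathit Spin} \lra \Hom_{TMF_1(3)_*}(TMF_1(3)_*B{\mathit Spin}, TMF_1(3)_*)$$
to be an isomorphism, which is exactly the content of the corollary. The main point to verify is compatibility of the structural maps of $(\mathrm{S}')$ with those of the Theorem \ref{main} square after the Landweber adjunction is applied; this is routine naturality, and I anticipate no essential obstruction beyond bookkeeping.
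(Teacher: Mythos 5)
Your argument is correct and is essentially the paper's own proof, which simply observes that $\Hom_{TMF_1(3)_*}(TMF_1(3)_*B{\mathit Spin}, TMF_1(3)_*)$ also satisfies the pullback property of the diagram in Theorem \ref{main}, obtained by applying $\Hom_{BP_*}(BP_*B{\mathit Spin},-)$ to the coefficient pullback and identifying the other three corners via their universal coefficient isomorphisms. The only cosmetic difference is that the paper gets the $K_{Tate}$-corner from Theorem \ref{UCTE(k)} (the $E(1)$ case) rather than from freeness of $K_*B{\mathit Spin}$, but this is immaterial.
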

\begin{proof} The right hand side satisfies the pullback property in the diagram of the main theorem.
\end{proof}
\section{Pontryagin classes and the cohomology of $B{\mathit String}$}\label{SecBString}
In this section we construct explicit generators in the $TMF_1(3)$-cohomology rings of $B{\mathit Spin}$ and $B{\mathit String}$ with the help of Theorem \ref{main}.
We start with a reminder of the $KO$-Pontryagin classes. We will see that for line bundles they are defined by the same formula as the Pontryagin classes in singular homology. Using Theorem \ref{main} we then obtain a construction of $TMF_1(3)$-Pontryagin classes. \par
Recall from \cite[Proposition 4.4(b)]{MR0189043}  that the restriction maps 
from $ KO^0(B{\mathit SO}(n))$ to the maximal torus $K^0(BT(\lfloor n/2\rfloor ))$ are monomorphisms with images the invariants of the Weyl groups. The $KO$-Pontryagin classes are defined  by the preimage of the series
 \begin{eqnarray}\label{Pseries}
  \prod_{i=1}^{\lfloor n/2\rfloor } (1-t(x_i \overline{x}_i)) \in K^0(BT(\lfloor n/2\rfloor )).
  \end{eqnarray}
 Here, $x_i=1-L_i$ are the first Chern classes of the canonical line bundles $L_i$ over $BT(\lfloor n/2\rfloor)$ in  $K$-theory. 
\par 
The Pontryagin classes freely (topologically) generate the ring $KO^0(B{\mathit SO})$. Since we do not know a reference for this fact we give a short argument: first note that in $K$-theory we have for all $x=1-L$ the equality
 $$x+\bar{x}=x\bar{x}.$$
Hence a power series  which is invariant under the map which interchanges $x$ and $\bar {x}$ can be written as a power series in $x\bar{x}$. We conclude that each class in $KO^0(B{\mathit SO})$ is a symmetric power series in $x_i\bar{x_i}$ when restricted to $K^0BT$ for all tori $T$. Thus it is a power series in the Pontryagin classes. 
\par
We also note that
$$  KO^0(B{\mathit Spin})\cong KO^0(B{\mathit SO}).$$
This fact follows from the arithmetic square   (\ref{as}) since the map from $B{\mathit Spin}$ to $B{\mathit SO}$ is a $K(1)$-local and rational equivalence (see \cite[Theorem 1.2(ii)]{MR1909866}).

 \begin{proof}[Proof of Theorem \ref{Pontryagin}: ]
 The $K$-Pontryagin classes freely generate the $K^*_{Tate}$-algebra $K^*_{Tate}(B{\mathit Spin})$. We also know that the classical Pontryagin classes in rational singular cohomology freely generate $H^*(B{\mathit Spin}; TMF_1(3)^*_{\QQ})$ as a $TMF_1(3)^*_{\QQ}$-algebra.  They are defined in the same way except that in Formula (\ref{Pseries}), the $x_i$ are the ordinary first Chern classes. 
 \par
 The multiplicative formal group law over ${K_{Tate}}_*$ is strictly isomorphic to the one coming from the Tate curve, that is, the $q$-expansion of the curve (\ref{curve}) together with the standard coordinate. By  \cite{MR1022688}  there is a natural automorphism of $K_{Tate}$ which exchanges the corresponding  two orientations. Hence, when we replace the $x_i$ in the formula for the $K$-Pontryagin classes by the first Chern classes with respect to the new orientation we still have free generators. The same argument holds for rational singular homology because here all formal group laws are strictly isomorphic. \par
These Pontryagin classes define elements in each corner of the pullback diagram of Theorem \ref{main} and hence free generators of $TMF_1(3)^*B{\mathit Spin}$. Moreover, these classes are determined by their restrictions to the maximal tori by Corollary \ref{torus} and are here given by the displayed formula. \end{proof}
Next we consider the cohomology of $B{\mathit String}$.
The space $B{\mathit String}$ is defined as the homotopy fibre of the map
$B{\mathit Spin} \rightarrow K(\ZZ, 4)$ which kills the lowest homotopy group. In particular, we have a sequence of infinite loop spaces
\begin{eqnarray}\label{eqn1} &\xymatrix{K(\ZZ/2,2)\ar[r]& K(\ZZ ,3)\ar[r]& B{\mathit String} \ar[r]&B{\mathit Spin}}.
\end{eqnarray}
\begin{thm}[\cite{MR2093483} \cite{MR2086079}]\label{KLW}
The sequence (\ref{eqn1}) induces an exact sequence of Hopf algebras in Morava $K(2)$-homology at the prime 2
$$\xymatrix@R=0.6pc{K(2)_* \ar[r]  &K(2)_*K(\ZZ/2,2)\ar[r] & K(2)_*K(\ZZ ,3)\ar[r] & K(2)_*B{\mathit String} &\\ \ar[r]& K(2)_*B{\mathit Spin}\ar[r]  &K(2)_*.}
$$
Algebraically, there is short exact sequence of Hopf algebras
$$\xymatrix{ K(2)_*K(\ZZ ,3)\ar[r] & K(2)_*B{\mathit String}  \ar[r]& K(2)_*B{\mathit Spin}}
$$
which splits. (Here, the first map is not the one which is induced from the second map of (\ref{eqn1}).)
In particular, the module $K(2)_*B{\mathit String}$ is concentrated in even dimensions. 
\end{thm}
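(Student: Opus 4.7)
The natural strategy is to combine two tools: the Postnikov-style fibre sequences underlying (\ref{eqn1}), and the Ravenel–Wilson machinery for Morava $K$-theory of Eilenberg–MacLane and related $H$-spaces. First I would assemble the known input: Ravenel–Wilson gives explicit Hopf-algebra descriptions of $K(2)_*K(\ZZ/2,2)$ and $K(2)_*K(\ZZ,3)$, both concentrated in even degrees, while the Kitchloo–Laures theorem already cited in the excerpt gives $K(2)_*B{\mathit Spin}$ as an even-dimensional polynomial Hopf algebra on the classes $b_{2^n\cdot 2i}$. With every term in sight even-dimensional, one expects Atiyah–Hirzebruch / Rothenberg–Steenrod / Serre spectral sequences computing the $K(2)$-homology of intermediate spaces to collapse at $E_2$.

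Next I would derive the long exact sequence of Hopf algebras. For a principal fibration $F\to E\to B$ of infinite loop spaces with even $K(n)_*F$ and even $K(n)_*B$, the Rothenberg–Steenrod spectral sequence $\mathrm{Cotor}^{K(n)_*B}(K(n)_*,K(n)_*E)\Rightarrow K(n)_*F$ collapses and produces a short exact sequence of commutative, cocommutative Hopf algebras $K(n)_*F\hookrightarrow K(n)_*E\twoheadrightarrow K(n)_*B$. Applying this both to $K(\ZZ,3)\to B{\mathit String}\to B{\mathit Spin}$ and to $K(\ZZ/2,2)\to K(\ZZ,3)\to K(\ZZ,3)$ (the loop-deloop of the multiplication-by-$2$ Bockstein) and splicing yields the displayed five-term sequence.

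For the second assertion — the short exact sequence of Hopf algebras splits as algebras — I would proceed by constructing the splitting explicitly in Hopf-algebraic coordinates. The quotient $K(2)_*B{\mathit Spin}$ is a polynomial algebra on the $b_{2^n\cdot 2i}$, so an algebra section exists as soon as one can choose lifts of these polynomial generators into $K(2)_*B{\mathit String}$. The subtle point, flagged in the parenthetical remark, is that the obvious map $B{\mathit String}\to B{\mathit Spin}$ does \emph{not} furnish such a section on the nose — the lifts must be adjusted by classes from the kernel $K(2)_*K(\ZZ,3)$. I would make these adjustments using the torus restriction: pull the $b_{i}$ back from $BT^\infty$, check that the pulled-back classes lift through $B{\mathit String}\to B{\mathit Spin}$ after suitable correction by divided-power generators in $K(2)_*K(\ZZ,3)$, and then invoke a dimension count (Poincaré series comparison with the tensor product of the two known Hopf algebras) to conclude that the algebra map $K(2)_*K(\ZZ,3)\otimes K(2)_*B{\mathit Spin}\to K(2)_*B{\mathit String}$ built from the inclusion and the chosen lifts is an isomorphism.

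The main obstacle is precisely this splitting step: one must verify that suitable lifts exist without circularity and that no higher-order multiplicative extension obstructs them. Equivalently, one has to rule out any nontrivial algebra deformation of the tensor-product structure, which in the Ravenel–Wilson setting typically boils down to identifying the generators with specific elements in $K(2)^*$ of finite skeleta of $BT^\infty$ and a Chern-class calculation — routine but delicate. Even-dimensionality of $K(2)_*B{\mathit String}$ is then a corollary of the splitting, since both factors are even.
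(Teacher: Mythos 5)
First, a remark on scope: the paper does not prove Theorem \ref{KLW} at all --- it is imported verbatim from Kitchloo--Laures--Wilson \cite{MR2093483} \cite{MR2086079} --- so there is no in-paper argument to compare against; your proposal has to stand on its own, and as written it does not.

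The decisive gap is your general principle that for a principal fibration $F\to E\to B$ of infinite loop spaces with $K(n)_*F$ and $K(n)_*B$ even, the relevant spectral sequence collapses and yields a short exact sequence of Hopf algebras $K(n)_*F\hookrightarrow K(n)_*E\twoheadrightarrow K(n)_*B$. This is false: for the path fibration $K(\ZZ/2,1)\to PK(\ZZ/2,2)\simeq {*}\to K(\ZZ/2,2)$, Ravenel--Wilson show that fibre and base have nontrivial, evenly graded $K(2)$-homology (free of ranks $4$ and $2$ over $K(2)_*$), yet the total space is a point. Worse, in the case at hand your claimed short exactness for $K(\ZZ,3)\to B{\mathit String}\to B{\mathit Spin}$ with the geometrically induced maps contradicts the very statement you are proving: the five-term exactness, combined with the injectivity of $K(2)_*K(\ZZ/2,2)\to K(2)_*K(\ZZ,3)$, says precisely that the fibre inclusion induces a map $K(2)_*K(\ZZ,3)\to K(2)_*B{\mathit String}$ with nontrivial kernel (namely the image of $K(2)_*K(\ZZ/2,2)$); this is exactly the content of the parenthetical remark that the first map of the split short exact sequence is \emph{not} the induced one. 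Identifying that kernel --- i.e.\ controlling the differentials in the bar/Eilenberg--Moore spectral sequences along the Whitehead tower of $BO$ --- is the actual hard content of the Kitchloo--Laures--Wilson computation, and your argument assumes it away. The later steps inherit the problem: the ``Poincar\'e series comparison with the tensor product'' you invoke to verify the splitting presupposes knowing the size of $K(2)_*B{\mathit String}$, which is precisely what is being computed, so that step is circular; and the evenness of $K(2)_*B{\mathit String}$, which you present as a corollary of the splitting, cannot be extracted this way either.
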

We will use this information for the computation of the cohomology ring of $B{\mathit String}$ with respect to 
$$E=L_{K(2)}E(2)$$
for a generalized $E(2)$.
The coefficients of $E$ are given by 
$E_*=(E(2)_*)^{\wedge}_{I_2}$.
\begin{prop}[\cite{MR1601906} Proposition 2.5]\label{HovStrick}
Suppose $X$ is a space with even Morava $K(2)$-homology.  Then $E^*X$  is the completion with respect to $I_2$ of a free $E_*$-module.
\end{prop}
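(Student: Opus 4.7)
The plan is to build the claimed isomorphism by lifting a $K(2)$-basis and showing the lift extends to an isomorphism from a completed free $E_*$-module.

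First I would propagate the evenness hypothesis up the chromatic tower. By the argument already run in Corollary \ref{even} (with $n=2$), the hypothesis that $K(2)_*X$ is even forces $E(k,2)^*X$ to be concentrated in even degrees for $k=0,1,2$, and the Bockstein short exact sequences
$$0 \longrightarrow E(k-1,2)^*X \xrightarrow{\ v_{k-1}\ } E(k-1,2)^*X \longrightarrow E(k,2)^*X \longrightarrow 0$$
remain short exact. By splicing these I obtain that $E(2)^*X/I_2^n$ is even for every $n\geq 1$ and that the $I_2$-adic filtration on $E(2)^*X$ has associated graded isomorphic to $K(2)^*X \otimes_{K(2)_*} \mathrm{gr}_{I_2} E(2)_*$.

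Next I would identify $E^*X$ with the $I_2$-adic completion of $E(2)^*X$. The $K(2)$-localization $L_{K(2)}E(2)$ can be realized as the homotopy inverse limit of a generalized Moore tower whose cofibres model $E(2)/I_2^n$; the evenness established in the previous step kills the relevant $\lim{}^1$ contribution, so that
$$E^*X \;\cong\; \varprojlim_n \bigl( E(2)^*X / I_2^n \bigr) \;\cong\; (E(2)^*X)^{\wedge}_{I_2}.$$
In particular $E^*X$ is itself $I_2$-adically complete and separated, and its reduction mod $I_2$ is canonically $K(2)^*X$.

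Now choose a homogeneous $K(2)_*$-basis $\{b_j\}_{j \in J}$ of the free module $K(2)^*X$ (free because $K(2)_*$ is a graded field and $K(2)^*X$ is even and, by duality with $K(2)_*X$, free). Lift each $b_j$ to an element $\tilde b_j \in E^*X$ and assemble the lifts into a continuous $E_*$-linear homomorphism
$$\Phi \colon \Bigl(\bigoplus_{j\in J} E_*\langle \deg b_j \rangle\Bigr)^{\wedge}_{I_2} \longrightarrow E^*X.$$
By construction the reduction $\Phi \otimes_{E_*} K(2)_*$ is the chosen basis isomorphism. Moreover, since Step 1 identifies each graded piece of the $I_2$-adic filtration on $E^*X$ with a free module tensor, the associated graded of $\Phi$ is an isomorphism in every degree. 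Both source and target are $I_2$-adically complete and separated, so a standard completed-Nakayama/filtration comparison argument upgrades this to the statement that $\Phi$ itself is an isomorphism.

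The main obstacle is the last step: ensuring that the source is genuinely $I_2$-adically complete and separated in the topology making $\Phi$ continuous, and that the associated-graded comparison can be integrated to the actual comparison. This is precisely where the evenness of $K(2)_*X$ is indispensable, because it is what makes the pro-system $\{E(2)^*X/I_2^n\}$ Mittag-Leffler and the successive Bocksteins degenerate, so that no odd-degree obstruction classes can appear to spoil either the Nakayama step or the vanishing of $\lim{}^1$ in Step 2.
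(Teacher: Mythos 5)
The paper does not prove this proposition at all: it is quoted verbatim from Hovey--Strickland \cite{MR1601906}, so there is no internal proof to compare against. Your reconstruction is essentially the argument of that reference (and of the surrounding machinery the paper does use, namely Corollary \ref{even} and the graded Nakayama lemma): propagate evenness through the Bockstein towers, express $L_{K(2)}E(2)$ as a homotopy limit of $E(2)$ smashed with a cofinal tower of generalized Moore spectra so that evenness plus Mittag--Leffler identifies $E^*X$ with the $I_2$-adic limit $\varprojlim E(2)^*X/(p^i,v_1^j)$, and then lift a $K(2)_*$-basis and compare associated gradeds. The outline is sound; the only step you should be explicit about is that regularity of $(p,v_1)$ on $E(2)^*X$ (which your Step 1 supplies) gives quasi-regularity, i.e.\ the isomorphism $\mathrm{gr}_{I_2}E(2)^*X\cong K(2)^*X\otimes_{K(2)_*}\mathrm{gr}_{I_2}E_*$, and that the target's completeness for the Nakayama comparison comes from its very construction as the inverse limit in Step 2 rather than being assumed --- this is exactly the point Hovey--Strickland formalize with their category of $L$-complete (pro-free) modules.
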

\begin{prop}\label{K(Z,3)}
There is an isomorphism of algebras
$$E^*K(\ZZ,3) \cong E^* [\! [r]\!] $$
with $r$ in degree 6.
\end{prop}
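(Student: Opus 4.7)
The plan is to combine the Ravenel--Wilson computation of the Morava $K(2)$-cohomology of $K(\ZZ,3)$ with Proposition \ref{HovStrick}, which lifts even, topologically free Morava $K$-theory data to $E$-cohomology. The first input, which also underlies Theorem \ref{KLW}, is that $K(2)_*K(\ZZ,3)$ is concentrated in even degrees and that dually $K(2)^*K(\ZZ,3)$ is isomorphic as a topological $K(2)^*$-algebra to $K(2)^*[\![\bar r]\!]$ for a single generator $\bar r$ of degree $6$ (the degree of $v_2$ at $p=2$). This is the classical Ravenel--Wilson calculation at height $n=p=2$, applied to the ``top'' Eilenberg--MacLane space $K(\ZZ,n+1)$.

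Given this, Proposition \ref{HovStrick} implies that $E^*K(\ZZ,3)$ is the $I_2$-adic completion of a free $E_*$-module, and that any $K(2)^*$-basis of $K(2)^*K(\ZZ,3)$ lifts to a topological $E_*$-basis of $E^*K(\ZZ,3)$. I would then pick any class $r\in E^6K(\ZZ,3)$ reducing modulo $I_2$ to the generator $\bar r$. The powers $r^i$ reduce to $\bar r^i$, which form a topological $K(2)^*$-basis of $K(2)^*[\![\bar r]\!]$; consequently $1,r,r^2,\ldots$ form a topological $E_*$-basis of $E^*K(\ZZ,3)$.

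The continuous $E^*$-algebra map $\varphi\colon E^*[\![r]\!]\to E^*K(\ZZ,3)$ sending the formal variable to the chosen $r$ is then a map of $I_2$-adically complete $E_*$-modules which is an isomorphism modulo $I_2$; since both sides are pro-free in the sense of Proposition \ref{HovStrick}, a standard Nakayama-style argument forces $\varphi$ itself to be an isomorphism. The main obstacle is the identification of $K(2)^*K(\ZZ,3)$ as a power series ring on a single degree-$6$ generator, which is supplied by the Ravenel--Wilson calculation (and is the reason the Hopf-algebra sequence in Theorem \ref{KLW} splits); after that, the lifting to $E$-cohomology is routine.
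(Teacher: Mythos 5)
Your proposal is correct and follows essentially the same route as the paper: cite the Ravenel--Wilson-type computation that $K(2)^*K(\ZZ,3)$ is a power series ring on a degree-$6$ generator, lift that generator to $E^*K(\ZZ,3)$, and conclude via Proposition \ref{HovStrick} together with a graded Nakayama argument for pro-free modules. The only point the paper makes explicit that you elide is why the algebra map $E^*[\![r]\!]\to E^*K(\ZZ,3)$ is actually defined on all power series, namely that the reduced class $r$ becomes nilpotent on every finite subcomplex so the relevant series converge; this is a minor omission, not a gap in substance.
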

\begin{proof}
The $K(2)$-cohomology of $K(\ZZ,3)$ has been computed in \cite{MR584466} and \cite{MR784291} (see also Su \cite{MR2709571} for its $E(1,2)$-cohomology.) It is topologically free on a generator of degree 6.  Lift this generator to an $E$-cohomology class. This is possible because the algebra $E^*K(\ZZ,3)$ is concentrated in even degrees with the Morava $K(2)$-cohomology as its $I_2$-reduction. (This can be seen as before with the exact sequence   (\ref{exseq}).)
Clearly, when restricted to a finite subcomplex of $K(\ZZ,3)$ every reduced class becomes  nilpotent. Hence, we obtain an algebra map  from $ E^* [\! [r]\!]$ to $E^*K(\ZZ,3)$. The result follows from Proposition \ref{HovStrick}  and the following version of Nakayama's Lemma.
\end{proof}
\begin{lemma} Let $R$ be a graded ring with a unique maximal homogeneous ideal $m$ and let $P$ and $Q$ be pro-free $R$ modules. 
Then  $f:P \ra Q$  is an isomorphism  if and only if it is so modulo $m$.
\end{lemma}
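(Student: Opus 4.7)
The forward direction is immediate. For the converse, suppose $\bar f : P/mP \to Q/mQ$ is an isomorphism. The plan is to construct a continuous graded $R$-linear map $g : Q \to P$ that reduces to $\bar f^{-1}$ modulo $m$, and then invert $fg$ and $gf$ by geometric series in the $m$-adic topology.

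To build $g$, fix a pro-basis $\{q_i\}_{i \in I}$ of $Q$, so that $Q$ is the $m$-adic completion of the free graded module $F = \bigoplus_i R\,q_i$ and the reductions $\bar q_i$ form a homogeneous basis of $Q/mQ$. Since $\bar f$ is an isomorphism of graded $R/m$-modules, for each $i$ we may choose a homogeneous element $p_i \in P$ of the same degree as $q_i$ with $\bar p_i = \bar f^{-1}(\bar q_i)$. The assignment $q_i \mapsto p_i$ defines a graded $R$-linear map $g_0 : F \to P$, and the inclusions $g_0(m^n F) \subset m^n P$ combined with the fact that $P$ is $m$-adically complete (being pro-free) let $g_0$ extend uniquely to a continuous graded $R$-linear map $g : Q \to P$. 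By construction $\bar g = \bar f^{-1}$.

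It then follows that $fg = \mathrm{id}_Q + h$ with $h(Q) \subset mQ$, and iterating yields $h^n(Q) \subset m^n Q$, so $\sum_{n \geq 0} (-h)^n$ converges in the $m$-adic topology to an inverse of $fg$; similarly $gf$ is invertible on $P$. Hence $f$ admits a right inverse $g \cdot (fg)^{-1}$ and a left inverse $(gf)^{-1}\cdot g$, and therefore is an isomorphism. The main subtlety is ensuring the construction of $g$ is compatible with both the grading and the topology; this is handled by lifting basis elements to the matching homogeneous degree, and by the completeness of the pro-free module $P$, which is precisely what makes the geometric series converge.
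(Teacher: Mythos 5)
Your argument is correct, but it takes a genuinely different route from the paper. The paper's proof is a d\'evissage: it tensors $f$ with the short exact sequences $m^k/m^{k+1}\rightarrowtail R/m^{k+1}\twoheadrightarrow R/m^k$, notes that $m^k/m^{k+1}$ is free over the graded field $R/m$, so that $f\otimes m^k/m^{k+1}$ is a (possibly infinite) sum of copies of the isomorphism $\bar f$, and concludes by induction and the five lemma that $f\otimes R/m^k$ is an isomorphism for every $k$; completeness of pro-free modules then gives the statement in the limit. You instead use pro-freeness of $Q$ to lift $\bar f^{-1}$ on a pro-basis to a continuous graded map $g:Q\to P$ and invert $fg=\mathrm{id}+h$ by a geometric series. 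Both are incarnations of topological Nakayama. Your version produces an explicit approximate inverse and isolates exactly where completeness enters (convergence of $\sum_{n}(-h)^n$); the paper's version avoids choosing a basis of $Q$ and only really uses that $P$ and $Q$ are complete with free associated graded, so it transports more easily to modules that are complete and flat but not presented as completions of free modules. Two points you should make explicit rather than leave implicit: convergence of the geometric series requires $Q$ to be Hausdorff as well as complete for the topology defined by the $m^nQ$ (equivalently, $\bigcap_n m^nQ=0$), which is part of what pro-freeness supplies; and the identification of $Q/mQ$ with $F/mF$, which you need in order to know that the $\bar q_i$ form an $R/m$-basis, is itself a (standard but not purely formal) property of the completion. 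In the situation where the lemma is applied, $R=E_*$ is Noetherian, so both points are unproblematic.
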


\begin{proof}
Tensor the short exact sequences
$$\xymatrix{m^k/m^{k+1}\, \, \ar@{>->}[r]& R/m^{k+1}\ar@{->>}[r]& R/m^k}$$
with $f$ and use the fact that $m^k/m^{k+1}$ is a free module over $R/m$. 
\end{proof}

\begin{proof}[Proof of Theorem \ref{BString}: ]
By Theorem \ref{KLW}, we find a lift  $r$ of the generator of $K(2)^*K(\ZZ,3)$ to $K(2)^*B{\mathit String}$. Since  $K(2)^*B{\mathit String}$ is concentrated in even degrees it is the quotient of $E^*B{\mathit String}$ by the ideal $I_2$. Hence we can lift $r$ further to a class in $E^*B{\mathit String}$ and obtain an algebra map
$$f:\xymatrix{ E^*B{\mathit Spin} \hat{\otimes }E^* [\! [r]\!]\ar[r] &E^*B{\mathit String}}.$$
This map  is an isomorphism by Proposition \ref{HovStrick} and the previous lemma. \end{proof}
\begin{remark}
Let $E$ be  the 2-complete $TMF_1(3)$. Then there is a pullback square of cohomology rings 
 $$ \xymatrix{E^*B{\mathit String}\ar[r]\ar[d]& (L_{K(2)}E)^*B{\mathit String}\ar[d]\\ (L_{K(1)}E)^*B{\mathit String}\ar[r]& (L_{K(1)}L_{K(2)}E)^*B{\mathit String}}$$
for which we have computed the three corners: $K(1)$-locally the map from $B{\mathit Spin}$ to $B{\mathit String}$ is an equivalence because the $K(1)$-homology of $K(\ZZ,3)$ vanishes.  Hence, the lower horizontal map is the inclusion
$$\xymatrix{ (L_{K(1)}E)^*[\! [p_1,p_2,\ldots ]\!]\ar[r]&(L_{K(1)}L_{K(2)}E)^*[\! [p_1,p_2,\ldots ]\!]}$$
but for the right vertical map the image of the class $r$  is unclear. We will address this question in a subsequent work. 
\end{remark}

\section{Applications to representations of loop groups}
A principal  $Spin(d)$-bundle $P$ over $X$ and a representation $V$ of $Spin(d)$ give rise to a vector bundle  over $X$  by associating $V$ to each fibre of $P$.  Hence $P$ and  $V$ define an element in the $K$-theory ring $K^0(X)$. If $X$ is a spin manifold and $P$ is the principal bundle associated to its tangent bundle, the pushforward of this $K$-theory class  to a point is the index of the Dirac operator twisted by $V$ (see \cite{MR0236950}). The construction induces a ring map
\begin{eqnarray}\label{RSpin}
\xymatrix{Rep(Spin(d)) \ar[r] & K^0(X)}.
\end{eqnarray}
It factors through the map from $K^0(B{\mathit Spin}(d))$ to $K^0(X)$ which classifies $P$ (see \cite{MR0259946}). \par
In \cite{MR1071369} Brylinski conjectured a similar connection for loop space representations and the elliptic cohomology  of the 7-connected cover of $B{\mathit Spin}(d)$ which is usually denoted by $B{\mathit String}(d)$. Recall from \cite{MR900587}) that
 a positive energy representation $V$ of the free loop space $L{\mathit Spin}(d)$  means that $V$ is a representation of the semi-direct product $\tilde{L}Spin(d)\rtimes S^1$ where $\tilde{L}Spin(d)$ is the universal central extension of $L{\mathit Spin}(d)$ and $S^1$ acts on $\tilde{L}Spin(d)$ by reparameterizing the loops. `Positive energy' means that  for the action of $R_t=e^{2\pi it} \in S^1 $ the vector space 
$$V_n=\{ v\in V:\: R_t(v)=e^{2\pi in t}v \mbox{ for  all }t\}$$
is finite dimensional for all $n$ and vanishes for all $n< n_0$ for some $n_0$. (Note that we can always multiply the rotation action with a character of $S^1$ to get $n_0=0$.) When identifying the group $Spin(d)$ with a subgroup of $\tilde{L}Spin(d)\rtimes S^1$, we obtain a formal power series
$$\sum_{n\in \ZZ } [V_n] q^n\in Rep(Spin(d))((q))$$
and hence via the map (\ref{RSpin}) a class in $K_{Tate}^0(X)$ for every $Spin(d)$-bundle $P$ over X. If $P$ comes from a string bundle then the loop space $LX$ carries a $\tilde{L}Spin(d)$-principal bundle (see ibidem). Hence, the class in $K_{Tate}^0(X)$ associated to $V$ can be viewed as 
$$ (\tilde{L}P \times_{\tilde{L}Spin(d)}V)_{|X}.$$
In case $X$ is a string manifold and $P$ comes from its tangent bundle,  the pushforward of this $K_{Tate}$-class to a point  is the formal index of the $S^1$-equivariant Dirac operator on the free loop space of $X$ with coefficient in the bundle associated to the representation $V$ (see \cite{MR970288}).
The fact that this index is a modular form of some level
 leads to the hope that the $K_{Tate}(M)$-cohomology class refines to a class in topological modular forms (c.\cite{MR1071369}). 
More precisely, let $m$ be an integer and let $P_m(d)$ be the free abelian group generated by the isomorphism classes of irreducible positive energy representations of  $\tilde{L}Spin(d)$ of level $m$ (see \cite[9.3]{MR900587} for the meaning of `level' ). 
\begin{conj}\label{conj}
There is an integer $n$ depending on $d$ and $m$ and an additive map
$$\varphi_d: P_m(d)\lra TMF(n)^0B{\mathit String}(d)$$
whose elliptic character $\lambda$  (defined as before in \cite{MR1022688})  coincides with  the bundle 
\begin{eqnarray}\label{varphid}
 \lambda \varphi_d(V)&=&(\tilde{L} E{\mathit Spin}(d)\times_{\tilde{L}Spin(d)}V)_{|B{\mathit String}(d)}.
\end{eqnarray}
 \end{conj}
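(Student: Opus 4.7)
The plan is to define $\varphi_d(V)$ by exhibiting compatible data in the $K_{Tate}$-corner and the rational corner of a pullback square for $TMF(n)^* B{\mathit String}(d)$ analogous to Theorem \ref{main}, and then invoking the pullback property to produce the desired $TMF(n)$-class. First I would construct the elliptic image: decomposing the positive energy representation $V$ under the rotation circle as $V = \bigoplus_n V_n$ with each $V_n$ a finite-dimensional representation of $\tilde{L}Spin(d)$, associating $V_n$ to the universal string bundle via the loop extension, and organizing the resulting bundles into a formal power series in $q$ produces the prescribed class $(\tilde{L}E{\mathit Spin}(d)\times_{\tilde{L}Spin(d)} V)_{|B{\mathit String}(d)}$ in $K_{Tate}^0 B{\mathit String}(d)$.

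Next I would produce the rational image. By Kac and Wakimoto, the character of an irreducible positive energy representation of level $m$ is a weak modular form, invariant under the principal congruence subgroup $\Gamma(n)$ for $n = n(d,m)$ sufficiently large; interpreting the character on the maximal torus of $Spin(d)$ via the splitting principle yields a class in $H^*(B{\mathit String}(d), TMF(n)^*_\QQ)$ whose Chern character matches the Dold character of the elliptic class. Landweber exactness together with Theorem \ref{tmfsheaf}(iii) transfers the pullback property of Theorem \ref{main} from $TMF_1(3)^* B{\mathit Spin}$ to $TMF(n)^* B{\mathit Spin}(d)$ after taking appropriate homotopy fixed points with respect to $GL_2(\ZZ/n)$, producing a Pontryagin-style lift over $B{\mathit Spin}(d)$. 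Extending to $B{\mathit String}(d)$ is controlled by Theorem \ref{BString}, which adjoins a single extra generator $r$ of degree $6$ coming from $K(\ZZ,3)$.

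The main obstacle is precisely this generator $r$: because the multiplicative formal group is of height one, $K_{Tate}$ cannot detect $K(\ZZ,3)$, so the elliptic character is blind to the $r$-coefficient of any lift of $\varphi_d(V)$, and the pullback square alone does not pin down the $TMF(n)$-class uniquely. Consequently the naive construction factors only through the coinvariants $(P_m(d))_{\Gamma(n)}$, matching the stable statement actually proved in this paper. Upgrading to the full conjecture on $P_m(d)$ requires producing the $r$-coefficient from genuine loop group data rather than from the Kac-Wakimoto character alone; I would attempt to do so by transferring the action of the modular group on level-$m$ theta functions into geometric information along the $K(\ZZ,3)$-direction, using the $K(2)$-local description of Theorem \ref{BString} as the bookkeeping device. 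Matching this affine-Lie-algebraic transformation law with the topology of $B{\mathit String}(d)$ beyond the spin summand is the step I expect to be the technical heart of the proof.
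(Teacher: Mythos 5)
First, a point of framing: the statement you are proving is Conjecture \ref{conj}, which the paper explicitly does \emph{not} prove; it proves the stable Theorem \ref{loopthm}, defined only on the subgroup $(P_m)_{\Gamma(n)}$ of representations with $\Gamma(n)$-invariant character (invariants, not coinvariants as you write), landing in a completed $TMF(n)$ of the stable $B{\mathit String}$. You correctly diagnose this gap between the conjecture and what is provable by character methods, and your reason — the class $r$ from $K(\ZZ,3)$ is invisible to the height-one theory $K_{Tate}$, so no $q$-expansion principle can hold over $B{\mathit String}$ at $p=2$ — is exactly the paper's own caveat after Theorem \ref{Pontryagin}. So as an assessment of the conjecture your proposal is sound.

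However, your sketch of the constructible part has a genuine gap. You propose to feed the character data directly into a pullback square, but the only square available integrally is the one over $B{\mathit Spin}$ (Proposition \ref{tmfnpull}, obtained from Theorem \ref{main} by \emph{flat base change} $M_{\Gamma_1(3)}\to M_{\Gamma(n)}$, not by $GL_2(\ZZ/n)$-homotopy fixed points — those go the wrong way, producing $TMF[1/n]$ from $TMF(n)$). And the class $\hat V=\sum [V_n]q^n\in K_{Tate}^0(B{\mathit Spin})$ does not satisfy the hypothesis of that square: by Kac--Wakimoto its character is a Jacobi form of index $m/2$, whose transformation law (\ref{gnaction}) carries the anomalous factor $e^{-2\pi i m c(\sum_j z_j^2)/(c\tau+d)}$. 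This factor is trivial on $B{\mathit String}$ (where $p_1=\sum_j z_j^2$ vanishes, whence the rational Theorem \ref{coxeter}), but it is \emph{not} trivial on $B{\mathit Spin}$, which is where the pullback must be applied. The paper's essential device, absent from your proposal, is the invertible class $S=\sigma/x\in TMF_1(3)^0B{\mathit String}$ built from the ratio of the Witten orientation to the complex orientation on $MU\langle 6\rangle$, whose character is a Jacobi form of weight $0$ and index $\tfrac12$. One uses the $K(1)$-local equivalence $B{\mathit String}\simeq B{\mathit Spin}$ (vanishing of $K(1)_*K(\ZZ,3)$) to regard $\hat V\lambda(S)^{-m}$ as a $K_{Tate}$-class on $B{\mathit Spin}$ with index-$0$, $\Gamma(n)$-invariant character, lifts it uniquely through the $B{\mathit Spin}$ pullback square, pulls back to $B{\mathit String}$, and finally multiplies by $S^m$ to define $\varphi[V]$. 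Without this anomaly-cancellation step your rational and $K_{Tate}$ data do not assemble into compatible corners of any pullback square that the paper (or your argument) establishes.
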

Let $n=24(m+g)$, with $g=d-2$ the Coxeter number. Theorem A in  \cite{MR954660} states that the character $J$ of a positive energy representation $V$ is a formal Jacobi modular form of level $n$, weight $w=0$ and index $\frac{m}{2}$. This means that it is  invariant under the action 
of $\Gamma(n)$ defined by
\begin{eqnarray}\label{gnaction}\qquad
\left(J \, \begin{bmatrix}a & b\\ c&d\end{bmatrix}\right)(z,\tau )=(c\tau +d)^{-w} e^{-2\pi i m c(\sum_j z_j^2)/(c\tau +d )} J(\frac{z}{c\tau +d}, \frac{a\tau +b}{c\tau +d})
\end{eqnarray}
when suitably normalized with a character of the rotation group. (In this formula the Chern roots $x_i$ are replaced by $2\pi i z_i$ (c.\cite{MR1071369}).) Since for string bundles the first Pontryagin class $p_1=\sum_j z_j^2$ vanishes we have the following result. 
\begin{thm}\cite[Remark 3.10]{MR1071369}\label{coxeter}
Conjecture \ref{conj} holds rationally for $n=24(m+g)$. 
\end{thm}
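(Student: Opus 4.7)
The plan is to exploit the rational Dold-character splitting of $TMF(n)$-cohomology, and then produce the lift by directly writing down the Jacobi-form character of $V$ and using the vanishing of $p_1$ on $B{\mathit String}(d)$ to convert the Jacobi invariance into the modularity of the coefficients in the Chern-root expansion.

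Rationally, $\pi_{2k}TMF(n) \otimes \QQ \cong (M_{\Gamma(n)})_k \otimes \QQ$, and Landweber exactness together with the Dold character yields a natural isomorphism
\begin{equation*}
TMF(n)^0B{\mathit String}(d) \otimes \QQ \;\cong\; \prod_{k\geq 0} \bigl(H^{2k}(B{\mathit String}(d); \QQ) \otimes (M_{\Gamma(n)})_k \otimes \QQ\bigr).
\end{equation*}
Hence to specify a rational degree-$0$ class it will suffice to give, for every cohomological degree $2k$, a weight-$k$ $\Gamma(n)$-modular form in front of each degree-$2k$ rational cohomology class of $B{\mathit String}(d)$.

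Given a positive energy representation $V$ of level $m$, I would compute the Chern character of the $K_{Tate}^0$-class (\ref{varphid}). Under the substitution $x_j = 2\pi i z_j$ this character equals the Kac--Wakimoto character $J_V(z,\tau)$ with $q = e^{2\pi i\tau}$, which by \cite[Theorem A]{MR954660} is a Jacobi form of weight $0$ and index $m/2$ for $\Gamma(n)$ with $n = 24(m+g)$, i.e.\ it obeys (\ref{gnaction}). On $B{\mathit String}(d)$ the first Pontryagin class $p_1 = \sum_j z_j^2$ vanishes, so the index factor in (\ref{gnaction}) collapses and $J_V$ becomes $\Gamma(n)$-invariant as a function of $(z,\tau)$. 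Expanding $J_V(z,\tau) = \sum_k f_k(\tau)\, z^k$ and matching $z^k$-coefficients under the transformation gives
\begin{equation*}
f_k\!\left(\frac{a\tau+b}{c\tau+d}\right) = (c\tau+d)^{|k|} f_k(\tau),
\end{equation*}
so $f_k \in (M_{\Gamma(n)})_{|k|} \otimes \QQ$. The series $\sum_k f_k\, z^k$ then lies in the target of the displayed isomorphism and I would define $\varphi_d(V)$ to be the corresponding rational class. Additivity in $V$ extends $\varphi_d$ to all of $P_m(d)$, and the relation (\ref{varphid}) holds by construction since $\lambda$ is computed by the same Chern-character formula.

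The hard part will be the modularity step: the Kac--Wakimoto theorem only guarantees the Jacobi transformation of the full generating series $J_V$, and one must verify term-by-term in the $z$-expansion that holomorphicity, the correct behavior at cusps, and the weight/degree bookkeeping all pass to each coefficient $f_k$, so that the resulting series truly lands in the Dold-character target rather than being merely a formal modular-looking object. Once this is secured, the remainder is formal: linearity in $V$, naturality in the base, and compatibility of $\lambda\varphi_d$ with (\ref{varphid}) all reduce to standard properties of the Chern character and the associated-bundle construction.
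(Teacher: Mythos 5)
Your proposal matches the argument the paper itself relies on (and attributes to Brylinski's Remark 3.10): the Kac--Wakimoto theorem makes the character a formal Jacobi form of weight $0$ and index $m/2$ for $\Gamma(n)$ with $n=24(m+g)$, the vanishing of $p_1=\sum_j z_j^2$ on $B{\mathit String}(d)$ kills the index factor in (\ref{gnaction}) so the character becomes genuinely $\Gamma(n)$-invariant, and the rational Dold/Chern-character splitting of $TMF(n)^*\otimes\QQ$ then turns the $z$-expansion coefficients into the required weight-graded modular forms. This is essentially the same route as the paper's, so no further comparison is needed.
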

We will show a stable version of the integral conjecture. 
Let $P_m$ be the inverse limit of all $P_m(d)$'s. For $n$ divisible by 3,  let $(P_m)_{\Gamma(n)}$ be the subgroup of $P_m$ consisting of  representations $V$ with character $J$ invariant under the action (\ref{gnaction}) of $\Gamma(n)$.
\begin{thm}\label{loopthm}
There is 
an additive map
$$\varphi: (P_m)_{\Gamma(n)}\lra \widehat{TMF(n )}^0B{\mathit String}$$
whose elliptic character is the class described in Equation (\ref{varphid}). 
\end{thm}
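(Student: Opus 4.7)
My plan is to construct $\varphi(V)$ corner-by-corner in a fracture square for $\widehat{TMF(n)}^0B{\mathit String}$ that parallels the pullback of Theorem \ref{main}, and then appeal to Galois descent along the tower $TMF(n)\to TMF(3)\to TMF_1(3)$. Concretely, I want to establish a pullback
\[
\xymatrix{\widehat{TMF(n)}^0B{\mathit String}\ar[r]^\lambda\ar[d] & K_{Tate,n}^0B{\mathit String}\ar[d]^{ch}\\ H^*(B{\mathit String},\widehat{TMF(n)}^0_\QQ)\ar[r] & H^*(B{\mathit String},(K_{Tate,n})^0_\QQ),}
\]
where $K_{Tate,n}$ is the level-$n$ enhancement of $K_{Tate}$, consisting of one copy of $K[1/n]((q))$ for each $\Gamma(n)$-structure on the Tate curve, and $\lambda$ is the product of the associated Miller characters. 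Once the square is in hand, $\varphi(V)$ is assembled from three pieces: the $K_{Tate,n}$-class $\xi(V)$ built from the loop-bundle $(\tilde{L}E{\mathit Spin}\times_{\tilde{L}Spin}V)|_{B{\mathit String}}$ via the positive-energy decomposition; the rational class $\varphi^\QQ(V)\in TMF(n)^0_\QQ B{\mathit String}$ produced by applying Theorem \ref{coxeter} at each $d$ and passing to the inverse limit; and their common image in the lower-right corner under Chern character and $q$-expansion. Additivity of $\varphi$ is then inherited from both corners.

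The bulk of the work is establishing this pullback. I would first prove the $\widehat{TMF_1(3)}$-analogue by combining Theorem \ref{BString} with the arithmetic-square reasoning used in the proof of Theorem \ref{main}: since $\widehat{TMF_1(3)}^*B{\mathit String}$ is a completed polynomial ring on $r$ and the Pontryagin classes, one checks that its image in the product of the upper-right and lower-left corners is exactly the subring of pairs whose rationalizations agree. For the Pontryagin generators this is immediate from Theorem \ref{Pontryagin} and Corollary \ref{UCT global}; the generator $r$ of Theorem \ref{BString} requires a separate analysis since it comes from $K(\ZZ,3)$-cohomology, but its rationalization and $K_{Tate}$-image are controlled by the classical description of the Chern character on $K(\ZZ,3)$. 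I would then descend from $\widehat{TMF_1(3)}$ to $\widehat{TMF(n)}$ via Theorem \ref{tmfsheaf}(iii): for $3\mid n$ the tower $TMF(n)\to TMF(3)\to TMF_1(3)$ consists of finite Galois-type covers, and the $\Gamma(n)$-invariance of the character of $V$ is precisely what is required for $\xi(V)$ to descend compatibly through the many $q$-expansions that appear at level $n$.

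I expect the main obstacle to be controlling the class $r$ under this Galois descent. Unlike the Pontryagin classes $p_i$, which have transparent modular interpretations coming from first Chern classes of line bundles, $r$ carries no obvious modular meaning; verifying that it glues correctly inside the level-$n$ pullback, and that the resulting class $\varphi(V)$ has the elliptic character of Equation (\ref{varphid}) rather than some off-by-an-$r$-correction, is the delicate technical point. A secondary concern is that Theorem \ref{coxeter} is stated at each $B{\mathit String}(d)$ individually, so the inverse limit must be checked to produce a well-defined class on $B{\mathit String}$; this should follow from compatibility of the bundle construction with the stabilization maps $B{\mathit String}(d)\to B{\mathit String}(d')$, combined with the stability of $V\in P_m$. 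Once these two points are handled, the theorem follows from the pullback description together with the compatibility of $\xi(V)$ and $\varphi^\QQ(V)$ built in by the rational content of Brylinski's conjecture.
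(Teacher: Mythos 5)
Your strategy rests on establishing a pullback square for $\widehat{TMF(n)}^0B{\mathit String}$ analogous to Theorem \ref{main}, and this is precisely where the argument breaks: no such square exists at $p=2$. The paper is explicit that the $q$-expansion principle for $B{\mathit String}$ holds only at odd primes and that ``a string characteristic class may not even be determined by its characters.'' The culprit is exactly the class $r$ of Theorem \ref{BString}: it comes from $K(\ZZ,3)$, whose $K(1)$-homology vanishes (so $r$ is invisible to the elliptic character, since $K_{Tate}^*B{\mathit String}$ agrees with $K_{Tate}^*B{\mathit Spin}$ after $2$-completion) and whose rational cohomology is an exterior algebra on a degree-$3$ class (so $r$ is not controlled by the Dold character either). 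You correctly flag $r$ as ``the delicate technical point,'' but it is not a technicality to be managed inside your square --- it is the obstruction to the square being a pullback at all. Uniqueness of a lift of $(\xi(V),\varphi^{\QQ}(V))$ fails, and with it your construction of $\varphi(V)$.

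The paper's proof sidesteps $B{\mathit String}$ entirely. It first builds an invertible class $S\in TMF_1(3)^0B{\mathit String}$ as the ratio $\sigma/x$ of the Witten orientation and the complex orientation of $MU\langle 6\rangle$ (via the Thom isomorphism); its character is a Jacobi form of weight $0$ and index $\tfrac12$. Given $V$ of level $m$, the class $\hat V=\sum[V_n]q^n$ has character of index $\tfrac m2$, which is the source of the anomalous factor $e^{-2\pi i m c(\sum_j z_j^2)/(c\tau+d)}$ in (\ref{gnaction}); multiplying by $\lambda(S)^{-m}$ cancels the index, producing a genuine weight-$0$, index-$0$ $\Gamma(n)$-invariant class. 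Because $K(1)_*K(\ZZ,3)=0$, this class lives in $K_{Tate}^0(B{\mathit Spin})$, where Proposition \ref{tmfnpull} --- the pullback for $B{\mathit Spin}$, which \emph{is} valid and is deduced from Corollary \ref{UCT global} by flat base change $M_{\Gamma_1(3)}\to M_{\Gamma(n)}$ rather than by Galois descent of classes --- produces a unique lift to $\widehat{TMF(n)}^0B{\mathit Spin}$. One then multiplies back by $S^m$ to land in $\widehat{TMF(n)}^0B{\mathit String}$. If you want to salvage your outline, replace the $B{\mathit String}$ fracture square by this ``reduce the index to zero, descend to $B{\mathit Spin}$, apply the spin pullback, multiply back by $S^m$'' manoeuvre; the rest of your assembly of the two character corners then goes through.
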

The proof will be given at the end of the section. 
\begin{lemma}\label{change level}
Let 3 be inverted in the following rings of modular forms and let $n$ be divisible by 3. Then the following ring extensions are flat:
\begin{eqnarray*}
M_{\Gamma_1(3)}&\lra &  M_{\Gamma(3)}\\
M_{\Gamma(3)}&\lra & M_{\Gamma(n)}
\end{eqnarray*}
\end{lemma}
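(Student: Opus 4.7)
The plan is to deduce both flatness assertions from the fact that the corresponding forgetful morphisms between representable moduli schemes are finite étale; the ring extension induced by a finite étale cover of affine schemes is automatically faithfully flat. I will first invoke the representability of the moduli problems $[\Gamma(m)]$ and $[\Gamma_1(m)]$ for $m \geq 3$ with $m$ invertible (\cite{MR772569}), so that $\mathcal{M}_{\Gamma(m)} = \mathrm{Spec}\, M_{\Gamma(m)}$ and $\mathcal{M}_{\Gamma_1(m)} = \mathrm{Spec}\, M_{\Gamma_1(m)}$ are honest affine schemes. Each of the two ring maps in the statement then corresponds to a forgetful morphism between these affine schemes, and it suffices to show that each such morphism is finite étale.

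For the first extension, the map $\mathcal{M}_{\Gamma(3)} \to \mathcal{M}_{\Gamma_1(3)}$ sends a basis $\alpha \colon (\ZZ/3)^2 \xrightarrow{\sim} C[3]$ to the point $\alpha(1,0)$ of exact order $3$. Given a $\Gamma_1(3)$-structure $P$, an extension to a $\Gamma(3)$-structure is a choice of second point $Q \in C[3]$ such that $\{P,Q\}$ is a basis. Since $3$ is inverted, $C[3]$ is a finite étale group scheme of rank $9$ over the base, and the admissible $Q$'s form the open-and-closed complement of $\langle P \rangle$, which is finite étale of rank $6$ over $\mathcal{M}_{\Gamma_1(3)}$. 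Hence the forgetful morphism is finite étale of degree $6$, and the induced ring map is faithfully flat.

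For the second extension I will argue analogously. The morphism $\mathcal{M}_{\Gamma(n)} \to \mathcal{M}_{\Gamma(3)}$ sends a basis $\beta \colon (\ZZ/n)^2 \xrightarrow{\sim} C[n]$ to the basis of $C[3]$ induced by composition with multiplication by $n/3$. Once a level-$3$ structure is fixed, the extensions to a level-$n$ structure form, étale-locally over the base, a torsor under a finite group of order $|Gl_2(\ZZ/n)|/|Gl_2(\ZZ/3)|$; since $C[n]$ is finite étale over the base (as $n$ is invertible), this parameterizing scheme is finite étale as well, whence the desired flatness.

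The main technical point, and the step that most requires care, is the verification that the parameterizing covers are genuinely étale rather than merely finite. This reduces to the standard fact that, with $n$ invertible on the base, the $n$-torsion group scheme $C[n]$ of an elliptic curve is finite étale, after which no further obstacle arises.
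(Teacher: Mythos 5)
Your argument is correct and is essentially the paper's own proof: the paper simply cites Katz--Mazur \cite[5.5.1]{MR772569} together with the representability of the $\Gamma_1(3)$-moduli problem, and what that citation encodes is exactly your observation that the forgetful maps between the (affine, representable) moduli schemes are finite \'etale because $C[n]$ is finite \'etale when $n$ is invertible. You have merely unpacked the reference by exhibiting the covers explicitly (degree $6$, resp.\ $|Gl_2(\ZZ/n)|/|Gl_2(\ZZ/3)|$), which is fine.
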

\begin{proof}
This follows from \cite[5.5.1]{MR772569} since the moduli problem for $\Gamma_1(3)$ structures is representable.
\end{proof}
\begin{lemma}\label{tensorhom}
Let $R$ be coherent, $M$ be a finitely generated $R$ module  and $N$ be a flat $R$-module. Then we have the isomorphism
$$ \Hom_R(M,R)\otimes_R N\cong \Hom_R(M,N).$$
\end{lemma}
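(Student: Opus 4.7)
The plan is to prove the isomorphism by reduction to the case of a finitely generated free module, combined with a five-lemma argument that exploits the flatness of $N$.

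First I would exhibit a natural $R$-linear map
$$\Phi_M : \Hom_R(M,R)\otimes_R N \lra \Hom_R(M,N), \qquad \Phi_M(f\otimes n)(x) = f(x)\cdot n,$$
which is well defined and functorial in $M$. Plainly $\Phi_R$ is the canonical isomorphism $R\otimes_R N \cong N$, and since both source and target of $\Phi$ send finite direct sums (in the $M$-slot) to finite direct sums, $\Phi_{R^n}$ is an isomorphism for every $n\geq 0$.

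Next, I would use coherence of $R$ together with finite generation of $M$ to produce a finite presentation
$$R^m \lra R^n \lra M \lra 0,$$
the point being that over a coherent ring every finitely generated module is finitely presented (the kernel of any surjection from a finite free module onto a finitely generated module is again finitely generated). Applying the left exact functor $\Hom_R(-,R)$ and then tensoring with $N$ gives a sequence which remains exact at the first two spots because $N$ is flat; applying $\Hom_R(-,N)$ directly gives a parallel left exact sequence. The naturality of $\Phi$ places these in a commutative diagram
$$\xymatrix{0\ar[r] & \Hom_R(M,R)\otimes N \ar[r]\ar[d]^{\Phi_M} & \Hom_R(R^n,R)\otimes N \ar[r]\ar[d]^{\cong} & \Hom_R(R^m,R)\otimes N \ar[d]^{\cong}\\
0\ar[r] & \Hom_R(M,N) \ar[r] & \Hom_R(R^n,N) \ar[r] & \Hom_R(R^m,N)}$$
in which the second and third vertical arrows are isomorphisms by the free case above. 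A standard four-lemma (or five-lemma with a trivial column appended) then forces $\Phi_M$ to be an isomorphism.

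The only genuine subtlety is ensuring the top row stays exact at $\Hom_R(M,R)\otimes N$; this is precisely where flatness of $N$ is used, and where the hypothesis that $R$ is coherent (rather than Noetherian) enters, since we need $M$ to be finitely presented in order for $\Hom_R(-,R)$ to leave a manageable target after tensoring. Everything else is formal diagram chasing, so I expect no real obstacle beyond being careful about the coherence-versus-finite-generation bookkeeping.
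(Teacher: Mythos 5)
Your proof is correct and is essentially the paper's own argument: both use coherence to get a finite free presentation of $M$, apply the left-exact functor $\Hom_R(-,R)$, use flatness of $N$ to preserve exactness after tensoring, and identify the two kernels via the isomorphism $\Hom_R(R^k,R)\otimes_R N\cong\Hom_R(R^k,N)$ for finite free modules. The explicit natural transformation and five-lemma framing are just a more formal packaging of the same kernel comparison.
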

\begin{proof}
This result should be standard. Choose a finitely generated free presentation $F_*$ of $M$. Since $N$ is flat the left hand side of the claim  is the kernel of 
$$ \Hom_R(F_0,R)\otimes_R N\lra \Hom_R(F_1,R)\otimes_R N .$$
By finiteness it coincides with the kernel of 
$$   \Hom_R(F_0,N)\lra \Hom_R(F_1,N).$$
which is the right hand side.
\end{proof}
\begin{prop}\label{tmfnpull}
The diagram 
$$
\xymatrix{TMF(n)^*B{\mathit Spin} \ar[r]^\lambda \ar[d]&K_{Tate}^*B{\mathit Spin}\ar[d]\\
 H^*(B{\mathit Spin}, TMF(n)^*_\QQ)\ar[r]&  H^*(B{\mathit Spin}, {K_{Tate}}^*_\QQ)}
 $$
 is a pullback for all $n$ divisible by 3.  
 \end{prop}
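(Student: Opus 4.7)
The plan is to deduce this from Theorem~\ref{main} by flat base change along the extension $M_{\Gamma_1(3)}\to M_{\Gamma(n)}$, which is flat by Lemma~\ref{change level} precisely when $3\mid n$. Flat base change preserves pullbacks of modules, so the strategy is to apply $-\otimes_{M_{\Gamma_1(3)}} M_{\Gamma(n)}$ to each corner of Theorem~\ref{main}'s diagram and identify the outcome with the claimed $TMF(n)$-corners.

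I would first recall that the proof of Theorem~\ref{main} exhibits its pullback as the image of the coefficient-ring pullback
$$
\xymatrix{TMF_1(3)_* \ar[r]\ar[d] & K_{Tate\,*}\ar[d] \\ (TMF_1(3)_*)_\QQ \ar[r] & (K_{Tate\,*})_\QQ}
$$
(each corner Landweber exact) under the left-exact functor $\Hom_{BP_*}(BP_* B{\mathit Spin},-)$, combined with the universal coefficient isomorphism of Corollary~\ref{UCT global}. Tensor this coefficient-ring pullback over $M_{\Gamma_1(3)}$ with $M_{\Gamma(n)}$: flatness keeps it a pullback, the left column becomes $TMF(n)_*\to (TMF(n)_*)_\QQ$, and each corner remains Landweber exact over $MU_*$. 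The right column should be unchanged, since the map $M_{\Gamma_1(3)}\to K_{Tate\,*}$ induced by the Tate curve factors through its $\Gamma(n)$-enhancement over the same $q$-expansion when $3\mid n$.

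Next, apply $\Hom_{BP_*}(BP_* B{\mathit Spin},-)$ to this new coefficient pullback. Left-exactness preserves pullbacks, and Landweber exactness converts $\Hom_{BP_*}(BP_* B{\mathit Spin}, E_*)$ into $\Hom_{E_*}(E_* B{\mathit Spin}, E_*)$ at each corner. To identify the upper-left corner with $TMF(n)^* B{\mathit Spin}$ I would deduce the $TMF(n)$-universal coefficient isomorphism from its $TMF_1(3)$-version as follows: write $B{\mathit Spin}=\colim X_\alpha$ with $X_\alpha$ finite, so that $TMF_1(3)_* X_\alpha$ is finitely generated over $TMF_1(3)_*$; then Lemma~\ref{tensorhom} applied with $N=TMF(n)_*$ (flat) promotes Corollary~\ref{UCT global} at each $X_\alpha$ to the analogous statement over $TMF(n)_*$. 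Passage to the inverse limit uses the Mittag-Leffler condition guaranteed by the freeness (up to completion) of $TMF_1(3)^* B{\mathit Spin}$ established in Theorem~\ref{Pontryagin}. The analogous base-change argument handles the remaining three corners.

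The main obstacle is the right-hand column bookkeeping: one must verify carefully that the $K_{Tate}$ appearing in the statement really agrees with the base change $K_{Tate\,*}\otimes_{M_{\Gamma_1(3)}} M_{\Gamma(n)}$, or at least that the two fit into a compatible square with the $TMF(n)$-corner so that the pullback property transfers intact. Once this identification is in place, the mechanism of Theorem~\ref{main} runs with $TMF_1(3)$ replaced by $TMF(n)$ and gives the desired pullback.
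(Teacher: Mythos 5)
Your proposal is correct and follows essentially the same route as the paper: the paper likewise establishes $TMF(n)^*B{\mathit Spin}\cong \Hom(TMF_1(3)_*B{\mathit Spin}, M_{\Gamma(n)})$ by flat base change (Lemma~\ref{change level}), Corollary~\ref{UCT global}, and Lemma~\ref{tensorhom} applied to finite subspectra followed by passage to inverse limits, and then reruns the coefficient-pullback mechanism of Theorem~\ref{main}. Your flagged concern about identifying the $K_{Tate}$-column after base change is a real point of bookkeeping that the paper's own proof also leaves implicit.
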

 \begin{proof}
 This follows as before from the universal coefficient isomorphism:
 \begin{eqnarray*}
TMF(n)^*B{\mathit Spin}&\cong &TMF_1(3)^*B{\mathit Spin} \hat{\otimes}_{M_{\Gamma_1(3)}}M_{\Gamma(n)}
\\ & \cong & \Hom(TMF_1(3)_*B{\mathit Spin}, M_{\Gamma_1(3)})\hat{ {\otimes}}_{M_{\Gamma_1(3)}}M_{\Gamma(n)}
\\ &\cong & \Hom(TMF_1(3)_*B{\mathit Spin}, M_{\Gamma(n)}).
\end{eqnarray*} 
Here, the first  isomorphism holds for all finite spectra by  Lemma \ref{change level}. (The completion of the tensor product is then unnecessary). Taking inverse limits over all finite subspectra and using  Corollary  \ref{UCT global} we see that the isomorphism holds for $B{\mathit Spin}$ for the completed tensor product. The second isomorphism follows again from Corollary \ref{UCT global}. The last one is a consequence of Lemmas \ref{change level} and  \ref{tensorhom} when considering finite subspectra of $B{\mathit Spin}$.
\end{proof}
\begin{proof}[Proof of \ref{loopthm}. ] Set $E=TMF_1(3)$.
We first construct a specific invertible element $S$ in $E^0B{\mathit String}$ whose character is a Jacobi form of weight 0 and index $\frac{1}{2}$.  Let $x: MU\ra E$ be the orientation considered earlier. Its exponential is given in terms of the Weierstrass $\Phi$-function  by
$$ \frac{\Phi(\tau , z )\Phi(\tau,-\omega)}{\Phi(\tau,z-\omega)}$$
for the standard division point $\omega=2\pi i/3$ (see \cite[5.3, 6.4]{MR1189136}). 
Let $BU\langle 6\rangle $ be the 5-connected cover of $BU$ and   $MU\langle 6\rangle $ be the corresponding Thom spectrum. Then $E$ admits two ring maps from  $MU\langle 6\rangle $: the one which factors through $x$ and the one which factors through the Witten orientation (Theorem \ref{tmfsheaf}(ii)). Using the Thom isomorphism we obtain a class
$$\frac{\sigma}{x}\in  E^0 BU\langle 6\rangle$$
whose augmentation is 1. Let $S$ be the image of $\frac{\sigma}{x}$ under the complexification  map  from $B{\mathit String}$ to $BU\langle 6\rangle $.  The character of $S$ is the function
$$ \frac{\Phi(\tau,z-\omega)}{\Phi(\tau,-\omega)}.$$ One can check that
this is a formal Jacobi function of weight 0 and index $\frac{1}{2}$ either by direct calculation or one uses the fact that the Witten orientation  $\sigma$ comes from a formal Jacobi modular form of index $\frac{1}{2}$ and $x$ is one of index 0  (see \cite[p.469]{MR1071369}). Since $\frac{\sigma}{x}$ has the inverse $\frac{x}{\sigma}$ the class $S$ is invertible.\par
Next, let 
$V$ be an irreducible positive energy representation in $(P_m)_{\Gamma(n)}$ and let 
$$\hat{V}= \sum_n [V_n]q^n$$   be the the class in $K_{Tate}^0(B{\mathit Spin})$ constructed above. 
Without loss of generality let $E$ be $L_{S\ZZ/2}TMF(n)$. Consider $S$ as a class in $E^0B{\mathit String}$. Since the $K(1)$-homology of $K(\ZZ,3)$ vanishes (c.\cite{MR584466}) the 2-completed $K_{Tate}$-cohomologies of  $B{\mathit String}$ and $B{\mathit Spin}$ coincide. Hence, we may consider the element 
$$\hat{V}\lambda(S)^{-m}\in K_{Tate}^0(B{\mathit Spin}).$$ Since its character is a Jacobi form of index 0 and weight 0 we can apply Proposition \ref{tmfnpull} to obtain a unique class in $E^0B{\mathit Spin}$ and hence in $E^0B{\mathit String}$. Define $\varphi[V]$ as the product of this class with $S^m$.

\end{proof}
\bibliographystyle{amsalpha}

\bibliography{toda}
\end{document}